\documentclass{amsart}



\usepackage[english]{babel}
\usepackage[utf8]{inputenc}
\usepackage[colorinlistoftodos]{todonotes}
\usepackage{esint}
\usepackage{mathrsfs}
\usepackage{breqn}
\usepackage{graphics}
\usepackage{enumerate}
\usepackage{graphicx}
\usepackage{placeins}
\usepackage{amsmath,amsthm,amsfonts,amssymb,euscript,verbatim,bbm}
\usepackage{hyperref}
\usepackage{amsrefs}

\usepackage{color}

\newtheorem{thm}{Theorem}
\newtheorem{remark}[thm]{Remark}
\newtheorem{prop}[thm]{Proposition}
\newtheorem{cor}[thm]{Corollary}
\newtheorem{lemma}[thm]{Lemma}

\newcommand{\eqdef}{\overset{\mbox{\tiny{def}}}{=}}

\def\Ddim {d}
\def\ss {\mu}
\def\rr {\nu}

\newcommand{\ba}{\begin{equation}}
\newcommand{\ea}{\end{equation}}

\newcommand{\bea}{\begin{eqnarray}}
\newcommand{\eea}{\end{eqnarray}}

\def\beaa{\begin{eqnarray*}}
\def\eeaa{\end{eqnarray*}}

\begin{document}

\title[Decay Estimates for the Muskat Equation]{Large Time Decay Estimates for the Muskat Equation}

\author{Neel Patel}
\address{Department of Mathematics, University of Pennsylvania, Philadelphia, PA 19104, USA.}
\email{neelpa@sas.upenn.edu}

\author{Robert M. Strain}
\address{Department of Mathematics, University of Pennsylvania, Philadelphia, PA 19104, USA.}
\email{strain@math.upenn.edu}
\thanks{R.M.S. was partially supported by the NSF grant DMS-1200747.}
\date{\today; 
}

\begin{abstract}
We prove time decay of solutions to the Muskat equation in 2D and in 3D. In \cite{JEMS} and \cite{CCGRPS}, the authors introduce the norms 
$$
\|f\|_{s}(t)\eqdef   \int_{\mathbb{R}^{2}} |\xi|^{s}|\hat{f}(\xi)| \ d\xi
$$ 
in order to prove global existence of solutions to the Muskat problem. In this paper, for the 3D Muskat problem, given initial data $f_{0}\in H^{l}(\mathbb{R}^{2})$ for some $l\geq 3$ such that $\|f_{0}\|_{1} < k_{0}$ for a constant $k_{0} \approx 1/5$, we prove uniform in time bounds of $\|f\|_{s}(t)$ for $-d < s < l-1$ and assuming $\|f_{0}\|_{\nu} < \infty$ we prove time decay estimates of the form 
$\|f\|_{s}(t) \lesssim (1+t)^{-s+\nu}$ for $0 \leq s \leq l-1$ and $-d \leq \nu < s$.  These large time decay rates are the same as the optimal rate for the linear Muskat equation.  We also prove analogous results in 2D.
\end{abstract}

\setcounter{tocdepth}{1}

\maketitle
\tableofcontents

\section{Introduction}

The Muskat problem describes the dynamics between two incompressible immiscible fluids in porous media such that the fluids are of different constant densities.   The Muskat problem is an extensively studied well established problem \cite{MR2070208,MR0097227,muskat,MR1294935,HeleShawN,GraneroBelinchon,MR3181769,MR3466160,MR3181767,Escher,MR2472040,interface,MR2753607,MR1223740,CGSV,MR1668586,CCGRPS,JEMS,ADP2,ChGBSh,MR3482335,CCFG,CCFGL,Beck,Bear,MR1612026,MR2128613}.  In this paper we consider the interface between the two fluids under the assumption that there is no surface tension and the fluids are of the same constant viscosity. Because the fluids are immiscible, we can assume that we have a sharp interface between the two fluids. Without loss of generality we normalize gravity $g=1$, permeability $\kappa =1$ and viscosity $\nu =1$.  Then the 3D Muskat problem is given by
\begin{align}\label{Muskat}
\rho_{t} + \nabla\cdot(u\rho) &= 0 \\
u+\nabla P &= -(0,0,\rho) \\
\nabla\cdot u &= 0
\end{align}
where $\rho = \rho(x_{1},x_{2},x_{3},t)$ is the fluid density function, $P= P(x_{1},x_{2},x_{3},t)$ is the pressure, and
$u = (u_{1}(x_{1},x_{2},x_{3},t), u_{2}(x_{1},x_{2},x_{3},t), u_{3}(x_{1},x_{2},x_{3},t))$
is the incompressible velocity field. Here $x_{i} \in\mathbb{R}$ for $i=1,2,3$ and $t\geq 0$. The third equation of this system simply states that the fluids are incompressible.  Given the incompressibility condition, the first equation is a conservation of mass equation, as the fluid density is preserved along the characteristic curves given by the velocity field. The second equation is called Darcy's Law, which governs the flow of a fluid through porous medium. When we assume that the two incompressible fluids are  of constant density, then the function $\rho(x_{1},x_{2},x_{3},t)$ can be written as
\[ \rho(x_{1},x_{2},x_{3},t) = 
\begin{cases} 
	\rho^{1} & (x_{1},x_{2},x_{3})\in \Omega^{1}(t) = \{x_{3} > f(x_{1},x_{2},t)\} \\
	\rho^{2} & (x_{1},x_{2},x_{3})\in \Omega^{2}(t) = \{x_{3} < f(x_{1},x_{2},t)\}
\end{cases} \]
where $\Omega^{i}$ for $i = 1,2$ are the regions in $\mathbb{R}^{3}$ occupied by the fluids of density $\rho^{i}$ for $i=1,2$ respectively and the equation $x_{3}= f(x_{1},x_{2},t)$ describes the interface between the two fluids.   We consider the stable regime (see \cite{interface}) in which $\rho_{1} < \rho_{2}$. 

The interface function, $f: \mathbb{R}_{x}^{2}\times \mathbb{R}^{+}_{t} \rightarrow \mathbb{R}$ is known to satisfy the equation 
\bea\label{interfaceq}
\frac{\partial f}{\partial t}(x,t) = \frac{\rho^{2}-\rho^{1}}{4\pi} PV\int_{\mathbb{R}^{2}}\frac{(\nabla f(x,t)-\nabla f(x-y,t))\cdot y}{[|y|^{2}+(f(x,t)-f(x-y,t))^{2})]^{\frac{3}{2}}} \ dy
\eea
with initial data $f(x,0) = f_{0}(x)$ for $x = (x_{1},x_{2}) \in\mathbb{R}^{2}$.
Without loss of generality for the results in this paper we can take $\frac{\rho_{2}-\rho_{1}}{2} = 1$. Then, as given in \cite{CCGRPS}, the 3D Muskat interface equation can be written as
\bea\label{interface3D}
f_{t}(x,t) = -\Lambda f - N(f), 
\eea
where $\Lambda$ is the square root of the negative Laplacian and
\bea\label{Nf}
N(f)(x) = \frac{1}{2\pi}\int_{\mathbb{R}^{2}}\frac{|y|}{y^{2}}\cdot \nabla_{x}\triangle_{y}f(x)R(\triangle_{y}f(x)) \ dy,
\eea
where 
$$
R(t) = 1- \frac{1}{(1+t^{2})^{\frac{3}{2}}}
$$ 
and 
$$
\triangle_{y}f(x) = \frac{f(x)-f(x-y)}{|y|}.
$$
We will use equation \eqref{interface3D} to prove uniform in time norm bounds and large time decay rates for the solution $f(t,x)$ in 3D.

We will also prove uniform in time norm bounds and large time decay rates for the 2D Muskat problem.  The 2D Muskat problem is given by the interface equation
\bea\label{interfaceq2D}
\frac{\partial f}{\partial t}(x,t) = \frac{\rho^{2}-\rho^{1}}{4\pi}\int_{\mathbb{R}}\frac{(\nabla f(x,t)-\nabla f(x-\alpha,t))\alpha}{\alpha^{2}+(f(x,t)-f(x-\alpha,t))^{2}} \ d\alpha
\eea
with initial data $f(x,0) = f_{0}(x)$ for $x\in\mathbb{R}$. The density function $\rho$ is given by
\[ \rho(x_{1},x_{2},t) = 
\begin{cases} 
\rho^{1} & (x_{1},x_{2})\in \Omega^{1}(t) = \{x_{2} > f(x_{1},t)\} \\
\rho^{2} & (x_{1},x_{2})\in \Omega^{2}(t) = \{x_{2} < f(x_{1},t)\}
\end{cases}. \]
Similarly to above, we can rewrite the 2D interface equation setting $\frac{\rho_{2}-\rho_{1}}{2} = 1$, as given by (9) in \cite{JEMS}
\bea\label{interface2dlinear}
f_{t}(x,t) = -\Lambda f - T(f)
\eea
where
\bea\label{Tf}
T(f) = \frac{1}{\pi} \int_{\mathbb{R}}\triangle_{\alpha}\partial_{x}f(x) \frac{(\triangle_{\alpha}f(x))^{2}}{1+(\triangle_{\alpha}f(x))^{2}} \ d\alpha
\eea
and $$\triangle_{\alpha}f(x) = \frac{f(x)- f(x-\alpha)}{\alpha}.$$
The equations (\ref{interface3D}) and (\ref{interface2dlinear}) will be the relevant formulations of the interface equation that we will use in this paper.

\section{Main Results}

In this section we will first introduce our notation.  Then, we will state our main results and explain relevant prior results on the Muskat problem.  Following that we discuss our strategy for proving the large time decay rates and we give an outline the rest of the article.

\subsection{Notation}
Typically we have for the dimension that $\Ddim \in \{1,2\}$.
Then we consider the following norm introduced in \cite{JEMS}:
\bea \label{Snorm}
\|f\|_{s} \eqdef \int_{\mathbb{R}^{\Ddim}} |\xi|^{s}|\hat{f}(\xi)| \ d\xi,
\eea
where $\hat{f}$ is the standard Fourier transform of $f$:  
$$
\hat{f}(\xi) \eqdef  \mathcal{F}[f](\xi) = \int _{\mathbb{R}^{\Ddim}} f(x) e^{-2\pi i x\cdot \xi} dx.
$$
We will use this norm generally for $s>-\Ddim$ and we refer to it as the \textit{s-norm}.
To further study the case $s = -\Ddim$, then for $s\ge -\Ddim$ we define the  \textit{Besov-type s-norm}:
\bea \label{normSinfty}
\|f\|_{s,\infty} \eqdef \Big\|\int_{C_{j}} |\xi|^{s}|\hat{f}(\xi)| \ d\xi\Big\|_{l^{\infty}_{j}}
=
\sup_{j \in \mathbb{Z}} \int_{C_{j}} |\xi|^{s}|\hat{f}(\xi)| \ d\xi,
\eea
where $C_{j} = \{\xi \in \mathbb{R}^\Ddim : 2^{j-1}\leq |\xi| < 2^{j}\}$.  Note that we have the inequality
\begin{equation}\label{ineqbd}
\|f\|_{s,\infty}  \leq \int_{\mathbb{R}^{d}} |\xi|^{s}|\hat{f}(\xi)| \ d\xi = \|f\|_{s}.	
\end{equation}
We point out that $\|f\|_{-\Ddim/p,\infty} \lesssim \|f\|_{L^p(\mathbb{R}^\Ddim)}$ for $p\in [1,2]$ as is shown in Lemma \ref{lem.bounds.upper}.  This and other embeddings are established in Section \ref{sec.embedding}.

Next, consider the operator $|\nabla|^{r}$ defined  for $r \in \mathbb{R}$ by
$$ 
\widehat{|\nabla|^{r} f}(\xi) = |\xi|^{r} \hat{f}(\xi).
$$
The Sobolev norms on the homogeneous Sovolev spaces $\dot{W}^{r,p}(\mathbb{R}^\Ddim)$ and inhomogeneous Sobolev spaces $W^{r,p}(\mathbb{R}^\Ddim)$ for $r\in\mathbb{R}$ and $1\leq p \leq \infty$ are given by:
\bea
\|f\|_{\dot{W}^{r,p}} = \||\nabla|^{r}f\|_{L^{p}(\mathbb{R}^\Ddim)}.
\eea
and
\bea
\|f\|_{W^{r,p}} = \|(1+|\nabla|^{2})^{\frac{r}{2}}f\|_{L^{p}(\mathbb{R}^\Ddim)}.
\eea
In the special case $p=2$, we write $W^{r,2}(\mathbb{R}^\Ddim) = H^{r}(\mathbb{R}^\Ddim)$  and   $\dot{W}^{r,2}(\mathbb{R}^\Ddim) =\dot{H}^{r}(\mathbb{R}^\Ddim)$.  We define the convolution of two functions as usual as
$$
(f \ast g)(x) = \int_{\mathbb{R}^{d}} f(y) g(x-y) dy.
$$
We adopt the following convention for an iterated convolutions of the same function
$$
\ast^{n}f \eqdef f \ast f \ast \cdots \ast f
$$
where the left-hand side of the above is a convolution of the function $f$ $n$ times.  This notation will be useful in some of the estimates.

Finally, we use the notation $f_{1} \lesssim f_{2}$ if there exists a uniform constant $C > 0$, that does not depend upon time, such that $f_{1} \leq C f_{2}$.  Also $f_{1} \approx f_{2}$ means that $f_{1} \lesssim f_{2}$ and $f_{2} \lesssim f_{1}$.  

\smallskip

Having explained the necessary notation, we now state our main results.

\subsection{Main theorem}

Given a well-defined fluid interface that exists globally in time, we will study its long-time behavior. We will first use a known well-posedness theory to establish a setting in which to study long-time behavior. In Theorem 3.1, \cite{CCGRPS} has the following global existence result in 3D:

\begin{thm}\label{oldthm3D}
Suppose that $f_{0} \in H^{l}(\mathbb{R}^{2})$, for some $l\geq 3$, and $\|f_{0}\|_{1} < k_{0}$ where $k_{0}>0$ satisfies for some $0<\delta < 1$ that
\bea\label{k0}
\pi\sum_{n\geq 1} (2n+1)^{1+\delta}\frac{(2n+1)!}{(2^{n}n!)^{2}}k_{0}^{2n} \leq 1.
\eea
 Then there exists a unique solution $f$ of \eqref{interfaceq} with initial data $f_{0}$.  Furthermore $f\in C([0,T]; H^{l}(\mathbb{R}^{2}))$ for any $T > 0$.  Any $0\le k_{0} \leq \frac{1}{5}$ satisfies (\ref{k0}) for some $\delta > 0$.
\end{thm}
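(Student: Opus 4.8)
The plan is to prove the result as an a priori estimate in the $\|\cdot\|_1$ norm combined with a standard local existence and continuation argument. The point is that \eqref{interface3D} is a perturbation of the dissipative linear flow $f_t=-\Lambda f$, and that $\|f\|_1$ is the critical quantity for this scaling: since $\|\nabla f\|_{L^\infty}\lesssim\|f\|_1$, smallness of $\|f\|_1$ keeps the interface slope bounded, so the equation stays in the Rayleigh--Taylor stable (parabolic) regime and the nonlinearity $N(f)$ can be absorbed by the linear dissipation. Accordingly I would first invoke a local-in-time existence and uniqueness theory in $H^l$ (an energy/contraction argument, for which $\|f\|_1<k_0$ controls the coefficients), and then show the a priori bound below prevents $\|f\|_1$ from ever growing, which upgrades the local solution to a global one and propagates the $H^l$ regularity on every $[0,T]$.

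To set up the a priori estimate I would differentiate the $1$-norm along the flow. Since $\widehat{f_t}=-|\xi|\hat f-\widehat{N(f)}$, a (formal, then regularized) computation of $\frac{d}{dt}\int|\xi|\,|\hat f|\,d\xi$ gives
\[
\frac{d}{dt}\|f\|_1\le -\|f\|_2+\|N(f)\|_1 ,
\]
so everything reduces to the multilinear bound $\|N(f)\|_1\le\Phi(\|f\|_1)\,\|f\|_2$ with an explicit increasing $\Phi$ satisfying $\Phi(k_0)\le1$. To compute $N(f)$ I would expand $R$ in its Taylor series: writing $(1+t^2)^{-3/2}=\sum_{n\ge0}\binom{-3/2}{n}t^{2n}$ and using $\binom{-3/2}{n}=(-1)^n\frac{(2n+1)!}{(2^n n!)^2}$ yields
\[
R(t)=\sum_{n\ge1}(-1)^{n+1}\frac{(2n+1)!}{(2^n n!)^2}\,t^{2n},
\]
exactly the coefficients appearing in \eqref{k0}. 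Substituting into \eqref{Nf} expresses $N(f)$ as a sum of $(2n+1)$-linear operators, the $n$-th one carrying the single derivative $\nabla_x\triangle_y f$ together with $2n$ copies of $\triangle_y f$ and the weight $\frac{(2n+1)!}{(2^n n!)^2}$.

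On the Fourier side each factor $\triangle_y f$ has transform $\hat f(\xi)\frac{1-e^{-2\pi i y\cdot\xi}}{|y|}$, so the $n$-th term is a $(2n+1)$-fold frequency convolution against a multiplier obtained by integrating the singular kernel $1/|y|$ in $y$; the difference-quotient structure makes this $y$-integral absolutely convergent after the cancellations. The $\nabla_x$ supplies one extra power of frequency, which after estimating $\|\cdot\|_1=\int|\xi||\widehat{\cdot}|$ is converted into the gain of $\|f\|_2$, while the remaining $2n$ factors are controlled, via the Wiener-algebra behavior of the $s$-norm (products becoming convolutions), by $\|f\|_1^{2n}$. Bounding $|\xi|=|\sum_j\xi_j|\le\sum_j|\xi_j|$ and summing symmetrically over the placements of the distinguished derivative produces the combinatorial factor, and I would track constants so that the $n$-th term is bounded by $\pi(2n+1)^{1+\delta}\frac{(2n+1)!}{(2^n n!)^2}\|f\|_1^{2n}\|f\|_2$; here the power $(2n+1)^{1+\delta}$ records the dyadic/frequency combinatorics and guarantees a uniform strict gap. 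Summing in $n$ gives $\Phi(k)=\pi\sum_{n\ge1}(2n+1)^{1+\delta}\frac{(2n+1)!}{(2^n n!)^2}k^{2n}$, so \eqref{k0} is precisely the statement $\Phi(k_0)\le1$. Hence whenever $\|f\|_1\le k_0$ we get $\frac{d}{dt}\|f\|_1\le-(1-\Phi(\|f\|_1))\|f\|_2\le0$, and since $\|f_0\|_1<k_0$ a continuity argument keeps $\|f\|_1(t)\le\|f_0\|_1<k_0$ for all $t\ge0$, closing the global bound.

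The main obstacle I expect is precisely this multilinear Fourier estimate: one must handle the principal-value singularity of $|y|/y^2$, show that the single gradient is uniformly converted into the one-derivative gain $\|f\|_2$ across all $2n+1$ factors, and extract the exact constant $\pi(2n+1)^{1+\delta}\frac{(2n+1)!}{(2^n n!)^2}$ with no loss, so that the series matches \eqref{k0}. Finally, the elementary claim that every $0\le k_0\le\frac15$ satisfies \eqref{k0} for some $\delta>0$ I would verify directly: since $\frac{(2n+1)!}{(2^n n!)^2}=(2n+1)\binom{2n}{n}4^{-n}\sim 2\sqrt{n/\pi}$, at $k_0=\frac15$ the general term decays like a polynomial in $n$ times $100^{-n}$, so the series is dominated by its leading terms and evaluates at $\delta=0$ to roughly $0.62<1$; continuity in $\delta$ and monotonicity in $k_0$ then yield the claim for all $k_0\le\frac15$.
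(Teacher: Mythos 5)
First, a framing note: this paper does not itself prove Theorem \ref{oldthm3D}; it quotes it from \cite{CCGRPS} (Theorem 3.1 there) and only recaps the key steps of that proof. Measured against that recap, your core a priori estimate is the right argument and essentially the one used: expand $R(t)=1-(1+t^{2})^{-3/2}$ with coefficients $a_{n}=\frac{(2n+1)!}{(2^{n}n!)^{2}}$, turn the $(2n+1)$-linear terms of $N(f)$ into iterated Fourier convolutions, distribute the single extra frequency via $|\xi|\le\sum_{j}|\xi_{j}|$, and deduce $\frac{d}{dt}\|f\|_{1}\le -C_{0}\|f\|_{2}$ when $\|f_{0}\|_{1}<k_{0}$; your numerical verification for $k_{0}=1/5$ (about $0.62<1$ at $\delta=0$, then continuity in $\delta$) is also correct. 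One correction: in the $\|\cdot\|_{1}$ estimate the sharp combinatorial factor is $(2n+1)$, not $(2n+1)^{1+\delta}$ --- summing $\pi\sum_{n}(2n+1)a_{n}\|f\|_{1}^{2n}\|f\|_{2}$ gives exactly the closed form $\pi\bigl(\tfrac{1+2\|f\|_{1}^{2}}{(1-\|f\|_{1}^{2})^{5/2}}-1\bigr)\|f\|_{2}$ that the paper records. The exponent $1+\delta$ in \eqref{k0} is not produced by ``dyadic/frequency combinatorics'' in that estimate, and your misplacing it is symptomatic of the real gap below.

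The genuine gap is the passage from the $\|\cdot\|_{1}$ bound to a global solution in $C([0,T];H^{l}(\mathbb{R}^{2}))$. You assert that once $\|f\|_{1}(t)\le\|f_{0}\|_{1}<k_{0}$ is propagated, this ``upgrades the local solution to a global one and propagates the $H^{l}$ regularity,'' but boundedness (even monotone decay) of $\|f\|_{1}$ does not by itself preclude finite-time blow-up of $\|f\|_{H^{l}}$, which is precisely what a continuation argument must rule out. This is where the hypothesis \eqref{k0} with exponent $1+\delta$ is actually consumed in \cite{CCGRPS}: by the power-mean inequality $\bigl(\sum_{j=1}^{2n+1}x_{j}\bigr)^{1+\delta}\le(2n+1)^{\delta}\sum_{j=1}^{2n+1}x_{j}^{1+\delta}$, the same Fourier argument run at regularity $1+\delta$ picks up the factor $(2n+1)^{\delta}\cdot(2n+1)=(2n+1)^{1+\delta}$, and \eqref{k0} then yields
\[
\|f\|_{1+\delta}(t)+\mu\int_{0}^{t}\|f\|_{2+\delta}(s)\,ds\le\|f_{0}\|_{1+\delta}.
\]
It is this time-integrability of $\|f\|_{2+\delta}$ that closes the $H^{l}$ energy estimate via Gr\"onwall, giving the bound \eqref{sobolevuniform}, $\|f\|_{H^{l}}(t)\le\|f_{0}\|_{H^{l}}\exp(CP(k_{0})\|f_{0}\|_{1+\delta}/\mu)$, and hence global continuation of the $H^{l}$ solution. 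Your proposal contains no substitute for this step: you never derive an $H^{l}$ (or any higher-norm) energy inequality, nor identify the integrable-in-time quantity that controls its growth, so the global existence and the $C([0,T];H^{l})$ conclusion remain unproven in your outline even granting the local theory and the $\|\cdot\|_{1}$ estimate.
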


In the proof of Theorem \ref{oldthm3D}, the authors \cite{CCGRPS} show that $\|f\|_{1}$ is uniformly bounded in time. They first bound $\mathscr{F}(N(f))=\widehat{N(f)}$ as follows:
$$
\int_{\mathbb{R}^{2}} |\xi||\mathscr{F}(N(f))| \ d\xi \leq  \pi\Big(\frac{1+2\|f\|_{1}^{2}}{(1-\|f\|_{1}^{2})^{\frac{5}{2}}}-1\Big)\|f\|_{2}.
$$
Then using the inequality
\begin{equation}
\frac{d}{dt} \|f\|_{1} (t) \leq -\int_{\mathbb{R}^{2}} d\xi \ |\xi|^{2}|\hat{f}(\xi)|  \ + \int_{\mathbb{R}^{2}} d\xi \ |\xi||\mathscr{F}(N(f))(\xi)|,
\end{equation}
it is shown that
\bea \notag 
\frac{d}{dt}\|f\|_{1}(t) \leq \Big(\pi\Big(\frac{1+2\|f\|_{1}^{2}}{(1-\|f\|_{1}^{2})^{\frac{5}{2}}}-1\Big)-1\Big)\|f\|_{2}.
\eea
Further since
$$
\pi\Big(\frac{1+2k_{0}^{2}}{(1-k_{0}^{2})^{\frac{5}{2}}}-1\Big)-1 < 0,
$$ 
it is seen for some $C_{0}= C_{0}(\|f_{0}\|_{1}) > 0$ that
\bea\label{1diffbound}
\frac{d}{dt}\|f\|_{1}(t) \leq -C_{0}\|f\|_{2}.
\eea
 In particular it holds for all $t\geq 0$ that
$
\|f\|_{1}(t) \leq \|f_{0}\|_{1} < k_{0}.
$

Related existence results can be shown in 2D \cite{CCGRPS}:  

\begin{thm} \cite{JEMS,CCGRPS} \label{2Doldthm}
If $f_{0}\in H^{l}(\mathbb{R})$ for some $l\geq 2$ and $\|f_{0}\|_{1} < c_{0}$ where $c_{0}$ satisfies
\bea\label{c0}
2\sum_{n\geq 1} (2n+1)^{1+\delta}c_{0}^{2n} \leq 1
\eea
for some $0<\delta <\frac{1}{2}$, then there exists a unique global in time solution $f$ of the Muskat problem \eqref{interfaceq2D} in 2D with initial data $f_{0}$ such that $f\in C([0,T];H^{l}(\mathbb{R}))$ for any $T>0$.  Further \eqref{c0} holds if for example $0\le c_{0}\le 1/3$.
\end{thm}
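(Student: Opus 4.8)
The plan is to follow the same strategy used for the 3D result recalled in Theorem \ref{oldthm3D}, now applied to the 2D formulation \eqref{interface2dlinear}. The solution is obtained by combining a standard local-in-time well-posedness theory in $H^{l}(\mathbb{R})$ with a global a priori bound on the smallness norm $\|f\|_{1}$ coming from a differential inequality. First I would derive the energy inequality for $\|f\|_{1}$. Taking the Fourier transform of \eqref{interface2dlinear}, using $\widehat{\Lambda f}(\xi)=|\xi|\hat{f}(\xi)$ together with the pointwise bound $\frac{d}{dt}|\hat{f}(\xi)| = \mathrm{Re}\big(\frac{\overline{\hat f}}{|\hat f|}\partial_{t}\hat f\big)\le -|\xi||\hat f|+|\mathscr{F}(T(f))(\xi)|$, and then integrating $|\xi|$ against this, one obtains
$$
\frac{d}{dt}\|f\|_{1}(t) \le -\|f\|_{2} + \int_{\mathbb{R}} |\xi|\,|\mathscr{F}(T(f))(\xi)|\, d\xi,
$$
exactly as in the inequality preceding \eqref{1diffbound} in the 3D case.

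The heart of the argument is the nonlinear estimate for the last integral. I would expand the nonlinear factor in \eqref{Tf} as a power series, $\frac{t^{2}}{1+t^{2}}=\sum_{n\ge 1}(-1)^{n-1}t^{2n}$, which is valid since $|\triangle_{\alpha}f(x)|\le \|\partial_{x}f\|_{L^{\infty}}\le\|f\|_{1}<c_{0}\le 1/3<1$ under the smallness hypothesis. This gives
$$
T(f) = \frac{1}{\pi}\sum_{n\ge 1}(-1)^{n-1}\int_{\mathbb{R}}\triangle_{\alpha}\partial_{x}f(x)\,(\triangle_{\alpha}f(x))^{2n}\, d\alpha.
$$
Each summand is a product of $2n+1$ difference quotients, so its Fourier transform is an iterated convolution of $2n+1$ factors. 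Estimating the convolution, placing the extra $|\xi|$ weight together with the derivative already present in $T(f)$ onto a single factor so as to produce one copy of $\|f\|_{2}$, and bounding the remaining $2n$ factors by $\|f\|_{1}$, I expect a bound of the form
$$
\int_{\mathbb{R}} |\xi|\,|\mathscr{F}(T(f))(\xi)|\, d\xi \le \Big(2\sum_{n\ge 1}(2n+1)^{1+\delta}\|f\|_{1}^{2n}\Big)\|f\|_{2},
$$
where the polynomial weight $(2n+1)^{1+\delta}$ arises from counting the ways to apportion the two derivatives among the $2n+1$ factors in the $s$-norm, and the exponent $\delta$ supplies the slack needed to sum the series.

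Combining the two displays yields
$$
\frac{d}{dt}\|f\|_{1}(t) \le \Big(-1+2\sum_{n\ge 1}(2n+1)^{1+\delta}\|f\|_{1}^{2n}\Big)\|f\|_{2}.
$$
Since $\|f_{0}\|_{1}<c_{0}$ and the bracket is increasing in $\|f\|_{1}$, condition \eqref{c0} forces the bracket to be strictly negative at $t=0$; a standard continuity/bootstrap argument then shows it stays negative, so $\|f\|_{1}(t)$ is non-increasing and $\|f\|_{1}(t)\le\|f_{0}\|_{1}<c_{0}$ for all $t\ge 0$. This uniform bound precludes finite-time breakdown, so combined with the local $H^{l}$ theory and a continuation argument it extends the solution to $f\in C([0,T];H^{l}(\mathbb{R}))$ for every $T>0$, with uniqueness inherited from the local theory.

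The main obstacle is the nonlinear estimate, namely controlling the iterated convolutions with the precise combinatorial constant $(2n+1)^{1+\delta}$ and the exact power $\|f\|_{1}^{2n}$. Obtaining this sharp constant, rather than an exponentially worse one, is what dictates the specific summability condition \eqref{c0} and hence the explicit threshold $c_{0}\le 1/3$; the delicate technical point is the bookkeeping of how the two derivatives may be distributed across the difference quotients while keeping the total cost controlled by a single factor of $\|f\|_{2}$.
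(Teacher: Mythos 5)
First, note that the paper does not prove this statement at all: Theorem \ref{2Doldthm} is quoted verbatim from \cite{JEMS,CCGRPS}, and the present paper only records two by-products of that cited proof, namely the differential inequality \eqref{1normineq2D} and the $H^{l}$ bound \eqref{sobolevuniform2D}. So the comparison must be against the cited proof, whose skeleton the paper itself displays. Your sketch correctly reproduces the first half of that argument: the Fourier-side energy inequality for $\|f\|_{1}$, the power-series expansion of the nonlinearity in \eqref{Tf} (justified by $|\triangle_{\alpha}f|\le\|\partial_{x}f\|_{L^{\infty}}\le\|f\|_{1}<1$), the iterated-convolution estimate yielding a bound of the form $\big(2\sum_{n\ge 1}C_{n}\|f\|_{1}^{2n}\big)\|f\|_{2}$, and the bootstrap giving $\|f\|_{1}(t)\le\|f_{0}\|_{1}<c_{0}$ for all time. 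Up to that point the approach matches.

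The genuine gap is in your final paragraph, and it also explains a misattribution earlier in your sketch. The statement ``this uniform bound precludes finite-time breakdown, so a continuation argument extends the solution in $H^{l}$'' is precisely the step that does not follow: a uniform-in-time bound on $\|f\|_{1}$ gives no control, by itself, of $\|f\|_{H^{l}}(t)$, and the local theory requires $\|f\|_{H^{l}}$ to stay finite to continue the solution. In the cited proof this is closed quantitatively: one runs the \emph{same} differential-inequality argument at the regularity level $1+\delta$, obtaining $\frac{d}{dt}\|f\|_{1+\delta}\le-\mu\|f\|_{2+\delta}$ and hence $\int_{0}^{t}\|f\|_{2+\delta}\,ds\lesssim\|f_{0}\|_{1+\delta}$; this time-integrated quantity is what controls the Gr\"onwall factor in the $H^{l}$ energy estimate, producing exactly \eqref{sobolevuniform2D}, $\|f\|_{H^{l}}(t)\le\|f_{0}\|_{H^{l}}\exp(CP(c_{0})\|f_{0}\|_{1+\delta})$. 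This is also why hypothesis \eqref{c0} carries the exponent $1+\delta$: distributing the weight $|\xi|^{1+\delta}$ over the $2n+1$ convolution factors (as in \eqref{trianglepower}) costs $(2n+1)^{1+\delta}$. Your reading of $\delta$ as ``slack needed to sum the series'' in the $\|f\|_{1}$ estimate is not the point --- for $s=1$ the constant $(2n+1)$ suffices and $\sum_{n}(2n+1)c_{0}^{2n}$ already converges for any $c_{0}<1$; the $\delta$ is forced by the $\|\cdot\|_{1+\delta}$-level inequality, which your proposal never invokes but cannot do without.
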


Analogously, in the course of the proof of the 2D existence Theorem \ref{2Doldthm}, it is shown that
\bea\label{1normineq2D}
\frac{d}{dt}\|f\|_{1}(t) \leq -\beta \|f\|_{2}(t),
\eea
for  a constant $\beta>0$ depending on the $c_{0}$ and $\|f_0\|_{H^{2}(\mathbb{R}^{\Ddim})}$.  These differential inequalities \eqref{1diffbound}  and \eqref{1normineq2D} will be very useful for proving the time decay rates.

In this paper, we prove time-decay rates for solutions to the Muskat problem.   For simplicity we will state our main theorem so that it holds in either dimension $\Ddim \in \{1,2\}$.  We consider a solution to the Muskat problem satisfying all of the assumptions of Theorem \ref{oldthm3D} (when $\Ddim=2$) or Theorem \ref{2Doldthm} (when $\Ddim=1$).

\begin{thm}\label{mainthm}
Suppose $f$ is the solution to the Muskat problem either described by Theorem \ref{oldthm3D} in 3D \eqref{interfaceq} , or described by Theorem \ref{2Doldthm} in 2D \eqref{interfaceq2D}.   In this case the initial data satisfies $f_{0} \in H^{l}(\mathbb{R}^{\Ddim})$ for some $l\geq 1+\Ddim$.

Then, for $-\Ddim <  s < l-1$, we have the uniform in time estimate
\bea\label{main2}
\|f\|_{s}(t) \lesssim 1.
\eea
In addition for $0 \le  s < l-1$ we have the uniform time decay estimate
\bea\label{main}
\|f\|_{s}(t) \lesssim (1+t)^{-s+\rr},
\eea
where we allow $\rr$ to satisfy $-\Ddim \le \rr < s$.  

For \eqref{main}, when $\rr > -\Ddim$ then we require 
additionally that $\|f_{0}\|_{\rr} < \infty$, and when $\rr = -\Ddim$ then we alternatively require 
$\|f_{0}\|_{-\Ddim,\infty} < \infty$.  The implicit constants in \eqref{main2} and \eqref{main} depend on $\|f_0\|_{s}<\infty$ and $k_{0}$.  In \eqref{main} the implicit constant further depends on  either  $\|f_{0}\|_{\rr}$ (when $\rr > -\Ddim$) or $\|f_{0}\|_{-\Ddim,\infty}$ (when $\rr = -\Ddim$).
\end{thm}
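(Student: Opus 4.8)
The plan is to treat the Muskat equation, in the forms \eqref{interface3D} and \eqref{interface2dlinear}, as a perturbation of the linear equation $f_{t}=-\Lambda f$, whose solution $\hat f(\xi,t)=e^{-|\xi|t}\hat f_{0}(\xi)$ already decays at exactly the claimed rate. Indeed, splitting $|\xi|^{s}=|\xi|^{\rr}\,|\xi|^{s-\rr}$ and using $\sup_{r\ge 0}r^{s-\rr}e^{-rt}\lesssim t^{-(s-\rr)}$, which is valid since $s-\rr>0$, gives $\int|\xi|^{s}e^{-|\xi|t}|\hat f_{0}|\,d\xi\lesssim t^{-(s-\rr)}\|f_{0}\|_{\rr}$. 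At the endpoint $\rr=-\Ddim$ the quantity $\|f_{0}\|_{-\Ddim}$ diverges, so there I would decompose dyadically over the annuli $C_{j}$, bound $\int_{C_{j}}|\xi|^{-\Ddim}|\hat f_{0}|$ by $\|f_{0}\|_{-\Ddim,\infty}$, and sum $\sum_{j}2^{j(s+\Ddim)}e^{-2^{j}t}\lesssim t^{-(s+\Ddim)}$. Combining with the trivial small-time bound upgrades $t^{-(s-\rr)}$ to $(1+t)^{-(s-\rr)}$, which is the form asserted in \eqref{main}.

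First I would establish the uniform bound \eqref{main2}. The key input is a family of nonlinear estimates generalizing the bound on $\widehat{N(f)}$ quoted after Theorem \ref{oldthm3D}, namely $\|N(f)\|_{s}\lesssim C(\|f\|_{1})\,\|f\|_{s+1}$ together with the analogue $\|T(f)\|_{s}\lesssim C(\|f\|_{1})\,\|f\|_{s+1}$ in 2D, where $C(\|f\|_{1})=\pi\big(\frac{1+2\|f\|_{1}^{2}}{(1-\|f\|_{1}^{2})^{5/2}}-1\big)$ is $O(\|f\|_{1}^{2})$ and in particular $C(\|f\|_{1})<1$ once $\|f\|_{1}<k_{0}$; this reflects that the nonlinearity is cubic, since $R(t)=1-(1+t^{2})^{-3/2}$ vanishes to second order at $t=0$. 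Differentiating $\|f\|_{s}$ in time and using \eqref{interface3D} exactly as in the derivation of \eqref{1diffbound} yields $\frac{d}{dt}\|f\|_{s}\le -\|f\|_{s+1}+\|N(f)\|_{s}\le -(1-C(\|f\|_{1}))\|f\|_{s+1}\le 0$, so $\|f\|_{s}(t)\le\|f_{0}\|_{s}$ for $s>0$. For $-\Ddim<s\le 0$ the same differential inequality closes once the nonlinear estimate is shown to persist at low frequencies, and here I would lean on the embedding results of Section \ref{sec.embedding} and interpolation between a negative-index and a positive-index norm to control the low-frequency part.

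The decay \eqref{main} I would prove from the Duhamel representation
\[
\|f\|_{s}(t)\le \int|\xi|^{s}e^{-|\xi|t}|\hat f_{0}|\,d\xi+\int_{0}^{t}\!\int|\xi|^{s}e^{-|\xi|(t-\tau)}|\widehat{N(f)}(\xi,\tau)|\,d\xi\,d\tau
\]
by a bootstrap on $\Theta_{s}(t)=\sup_{0\le\tau\le t}(1+\tau)^{s-\rr}\|f\|_{s}(\tau)$. The first term is handled by the linear estimate of the opening paragraph. For the second, I apply the smoothing bound $\int|\xi|^{s}e^{-|\xi|(t-\tau)}|\widehat{N(f)}|\,d\xi\lesssim (t-\tau)^{-(s-\sigma)}\|N(f)\|_{\sigma}(\tau)$ for a suitable $\sigma<s$ with $s-\sigma<1$, insert the cubic estimate $\|N(f)\|_{\sigma}\lesssim\|f\|_{1}^{2}\|f\|_{\sigma+1}$, and feed in the decay of the factors coming from the inductive hypothesis. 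A convolution-type time integral $\int_{0}^{t}(t-\tau)^{-(s-\sigma)}(1+\tau)^{-b}\,d\tau$, split at $\tau=t/2$, then produces $(1+t)^{-(s-\rr)}$ after optimizing $\sigma$; keeping track of whether each low-order factor contributes genuine decay (when $\rr<1$) or only its uniform bound (when $\rr\ge 1$) is what makes the exponents line up with the target rate.

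The main obstacle, and the part I expect to be most delicate, is closing this bootstrap uniformly over the entire parameter range $0\le s<l-1$, $-\Ddim\le\rr<s$. A single Duhamel step improves the decay rate by strictly less than $1$: the integrability constraint $s-\sigma<1$ near $\tau=t$, together with the requirement $\sigma\le\rr$ needed to dominate $(1+t)^{-(s-\rr)}$, forces $s-\rr<1$. Hence for $s-\rr\ge 1$ the estimate must be iterated along a ladder of intermediate rates, re-using at each stage the decay just proven for the lower norms appearing in $\|N(f)\|_{\sigma}$. Establishing the nonlinear estimates $\|N(f)\|_{s}\lesssim C(\|f\|_{1})\|f\|_{s+1}$ and $\|T(f)\|_{s}\lesssim C(\|f\|_{1})\|f\|_{s+1}$ for all admissible $s$, including negative $s$ where the low-frequency behaviour of the convolution structure of $\widehat{N(f)}$ must be controlled, is the other technical crux; I would derive these by Taylor expanding $R$, respectively the 2D nonlinearity, writing each term as an iterated convolution $\ast^{n}\hat f$ and summing the resulting series, which converges precisely because $\|f\|_{1}<k_{0}$.
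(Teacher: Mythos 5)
Your route (Duhamel representation plus a bootstrap on the weighted quantities $\Theta_{s}$) is genuinely different from the paper's, which never uses Duhamel at all: the paper works with the Fourier-space energy inequality $\frac{d}{dt}\|f\|_{s}+C\|f\|_{s+1}\leq\int|\xi|^{s}|\mathscr{F}(N(f))(\xi)|\,d\xi$ together with a Fourier-splitting decay lemma (Lemma \ref{decaylemma}). That difference would be acceptable, but as written your proposal has two genuine gaps. The first is your key nonlinear estimate: you claim $\|N(f)\|_{s}\lesssim C(\|f\|_{1})\|f\|_{s+1}$ with an \emph{$s$-independent} constant $C(\|f\|_{1})=\pi\bigl(\frac{1+2\|f\|_{1}^{2}}{(1-\|f\|_{1}^{2})^{5/2}}-1\bigr)<1$ whenever $\|f\|_{1}<k_{0}$, and from it you conclude $\frac{d}{dt}\|f\|_{s}\leq 0$ for all $s>0$. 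This is false for large $s$. Expanding $R$ in a Taylor series and distributing $|\xi|^{s}$ over the $2n+1$ frequency factors requires, for $s>1$, the inequality $|\xi|^{s}\leq(2n+1)^{s-1}(|\xi-\xi_{1}|^{s}+\cdots+|\xi_{2n}|^{s})$, which puts the factor $(2n+1)^{s}$ \emph{inside} the series: the true constant is $\pi\sum_{n\geq1}a_{n}(2n+1)^{s}\|f\|_{1}^{2n}$, as in \eqref{highersum}. The hypothesis \eqref{k0} defining $k_{0}$ only controls $\pi\sum_{n}a_{n}(2n+1)^{1+\delta}k_{0}^{2n}\leq1$ for some $\delta<1$; since $l$ is arbitrary, on most of the range $1<s<l-1$ the dissipation need not dominate at time zero, and both your monotonicity proof of \eqref{main2} and the constant fed into your bootstrap collapse. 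This is exactly why the paper first proves decay of $\|f\|_{1}(t)$ and only then, in Proposition \ref{higherdecay3D}, chooses a large time $T$ after which $1-\pi\sum_{n}a_{n}(2n+1)^{s}\|f\|_{1}(t)^{2n}>\delta>0$, applying the decay lemma to the shifted solution $f(\cdot+T)$. Some such two-stage structure (decay of a low norm first, large-$s$ estimates only for large times) is unavoidable, and it is missing from your argument.

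The second gap is the range $-\Ddim<s<0$. For $s<0$ the subadditivity/power-mean inequality above fails outright ($|\xi|^{s}$ is large at \emph{low} frequency), so there is no estimate of the form $\|N(f)\|_{s}\lesssim\|f\|_{s+1}$ whose "persistence at low frequencies" one could check; your appeal to the embeddings of Section \ref{sec.embedding} and interpolation is a placeholder, not an argument. What is actually available (Proposition \ref{prop9} and its 2D analogue) is the weaker bound $\frac{d}{dt}\|f\|_{s}\lesssim\|f\|_{1}$, proven via H\"older, Young, and crucially the Hardy--Littlewood--Sobolev inequality, which is what restricts $s$ to $(-2,-1)$ in 3D and $(-1,-\tfrac12)$ in 2D; the remaining negative values come from the Sobolev embedding and interpolation, and the endpoint $s=-\Ddim$ needs the separate annulus-by-annulus argument of Proposition \ref{endpointprop}. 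Because the right-hand side is only $\|f\|_{1}$, this bound yields uniform control exclusively after one integrates in time against the already-established decay $\|f\|_{1}(t)\lesssim(1+t)^{-1-\epsilon}$; the identical issue would appear in the nonlinear Duhamel term of your scheme. So the ordering of steps (embedding bounds $\Rightarrow$ partial decay of $\|f\|_{1}$ $\Rightarrow$ uniform bounds at negative $s$ and at the Besov endpoint $\Rightarrow$ full decay $\Rightarrow$ large-$s$ decay for large times) is not an optional refinement but the substance of the proof, and your proposal does not supply it.
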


It can be directly seen from the proof that for \eqref{main}, when $\rr > -\Ddim$ then one only needs to assume $\|f_{0}\|_{s,\infty} < \infty$ instead of the stronger condition $\|f_0\|_{s}<\infty$.   Also note that it is shown in Proposition \ref{endpointprop} and Section \ref{2Dsection} that $\|f\|_{-\Ddim,\infty}(t) \lesssim 1$ and we more generally have $\|f\|_{s,\infty}(t) \lesssim 1$ for $\rr \ge -\Ddim$ from \eqref{ineqbd}.

 The Muskat problem \eqref{interfaceq} or \eqref{interfaceq2D} can be linearized around the flat solution, which can be taken as $f(x,t)=0$, to find the following linearized  nonlocal partial differential equation
\begin{align}
\begin{split}\label{le}
f_t(x,t)&=-\frac{\rho^2-\rho^1}{2}\Lambda f(x,t),\\ 
f(\alpha,0)&=f_0(\alpha), \quad \alpha\in\mathbb{R}.
\end{split}
\end{align}
Here the operator $\Lambda$ is defined in Fourier variables by 
$\widehat{\Lambda f}(\xi)=|\xi|\widehat{f}(\xi)$.  This linearization shows the parabolic character of the Muskat problem in the stable case  which is $\rho^2 > \rho^1$ (\cite{interface}).

Notice that the decay rates which we obtain in Theorem \ref{mainthm} are consistent with the optimal large time decay rates for \eqref{le}.  In particular it can be shown by standard methods that if $g_0(x)$ is a tempered distribution vanishing at infinity and satisfying 
$\|g_{0}\|_{\rr,\infty} < \infty$, then   one further has
$$
\| g_0\|_{\rr,\infty}
\approx
\left\| t^{s-\rr} \left\|  e^{t \Lambda} g_0 \right\|_{s}  \right\|_{L^\infty_t((0, \infty) )}, \quad \text{for any $s\ge \rr$.}
$$
This equivalence then grants the optimal time decay rate of $t^{-s+\rr}$ for $\left\|  e^{t \Lambda} g_0 \right\|_{s}$ that is the same as the non-linear time decay in \eqref{main}.

Previously in 2009 in \cite{MR2472040} has shown that the Muskat problem satisfies a maximum principle $\|f\|_{L^{\infty}}(t)\leq \|f_0\|_{L^{\infty}}$; decay rates are obtained for the periodic case ($x \in \mathbb{T}^\Ddim$) as: 
 $$
 \|f\|_{L^\infty(\mathbb{T}^\Ddim)}(t)\leq \|f_0\|_{L^\infty(\mathbb{T}^\Ddim)}e^{-(\rho_2 - \rho_1)C(\|f_0\|_{L^\infty(\mathbb{T}^\Ddim)}) t},
 $$
 where the mean zero condition is used.    In the whole space case (when the interface is flat at infinity) then again in \cite{MR2472040} decay rates are obtained of the form 
$$
\|f\|_{L^\infty(\mathbb{R}^\Ddim)}(t)\leq \|f_0\|_{L^\infty(\mathbb{R}^\Ddim)}
\left(1+(\rho_2 - \rho_1)C(\|f_0\|_{L^\infty(\mathbb{R}^\Ddim)},\|f_0\|_{L^1(\mathbb{R}^\Ddim)})t\right)^{-\Ddim}.
$$ 
To prove this time decay in $\mathbb{R}^\Ddim$ they suppose that initially either $f_0(x) \ge 0$ or $f_0(x) \le 0$.  Notice that by the Hausdorff-Young inequality then \eqref{main} also proves this  $L^\infty(\mathbb{R}^\Ddim)$ decay rate of $t^{-\Ddim}$ under the condition $\|f_{0}\|_{-\Ddim,\infty} < \infty$.

Furthermore \cite{CCGRPS}, it is shown that if $\|\nabla f_0\|_{L^{\infty}(\mathbb{R}^2)} < 1/3$ then the solution of (\ref{interfaceq}) with initial data $f_{0}$ satisfies the uniform in time bound 
	$
	\|\nabla f\|_{L^{\infty}(\mathbb{R}^2)}(t) < 1/3.
	$
Note that \eqref{main} implies in particular when $\Ddim=2$ that 
\bea\label{gradientdecay}
\|\nabla f \|_{L^{\infty}_{x}} \lesssim \||\xi||\hat{f} |\|_{L^{1}_{\xi}} = \|f\|_{1} \lesssim (1+t)^{-3}.
\eea
However decay estimate (\ref{gradientdecay}) requires $\|f_{0}\|_{1} < k_{0}$ and $\|f_{0}\|_{-2,\infty} < \infty$, which is a stronger assumption than $\|\nabla f_{0}\|_{L^{\infty}} < 1/3$.

We further obtain the following corollary directly from the Hausdorff-Young inequality; this is explained in the embedding result (\ref{embedlower}) below.
\begin{cor}
Suppose $f$ is the solution to the Muskat problem either described by Theorem \ref{oldthm3D} in 3D \eqref{interfaceq} , or described by Theorem \ref{2Doldthm} in 2D \eqref{interfaceq2D}.   In this case the initial data satisfies $f_{0} \in H^{l}(\mathbb{R}^{\Ddim})$ for some $l\geq 1+\Ddim$.

Then, for $-\Ddim <  s < l-1$, we have the uniform in time estimate
\bea\label{main2.1}
\|f\|_{\dot{W}^{s,\infty}}(t) \lesssim 1.
\eea
In addition for $0 \le  s < l-1$ we have the uniform time decay estimate
\bea\label{main'}
\|f\|_{\dot{W}^{s,\infty}}(t) \lesssim (1+t)^{-s+\rr},
\eea
where we allow $\rr$ to satisfy $-\Ddim \le \rr < s$.  

For \eqref{main'}, when $\rr > -\Ddim$ then we require 
additionally that $\|f_{0}\|_{\rr} < \infty$, and when $\rr = -\Ddim$ then we alternatively require 
$\|f_{0}\|_{-\Ddim,\infty} < \infty$.  The implicit constant in \eqref{main2.1} and \eqref{main'} depend on $\|f_0\|_{s}<\infty$ and $k_{0}$.  In \eqref{main'} the implicit constant further depends on  either  $\|f_{0}\|_{\rr}$ (when $\rr > -\Ddim$) or $\|f_{0}\|_{-\Ddim,\infty}$ (when $\rr = -\Ddim$).
\end{cor}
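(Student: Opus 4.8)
The plan is to obtain this corollary as an immediate consequence of Theorem \ref{mainthm} combined with the Hausdorff--Young embedding $\|f\|_{\dot{W}^{s,\infty}} \lesssim \|f\|_{s}$ recorded in \eqref{embedlower}.

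First I would recall why this embedding holds. By definition $\|f\|_{\dot{W}^{s,\infty}} = \||\nabla|^{s}f\|_{L^{\infty}}$, and $\mathcal{F}[|\nabla|^{s}f](\xi) = |\xi|^{s}\hat{f}(\xi)$. Applying the $p=1$ endpoint of the Hausdorff--Young inequality to the inverse transform, so that an $L^{1}$ Fourier transform controls the $L^{\infty}$ norm of the function, yields
\ba
\|f\|_{\dot{W}^{s,\infty}} = \big\| \mathcal{F}^{-1}[|\xi|^{s}\hat{f}] \big\|_{L^{\infty}} \le \int_{\mathbb{R}^{\Ddim}} |\xi|^{s}|\hat{f}(\xi)| \ d\xi = \|f\|_{s}.
\ea
The implicit constant here depends only on the dimension and the chosen Fourier normalization, and in particular is independent of $t$.

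With this embedding in hand, both assertions follow by applying it pointwise in $t$. For the uniform bound \eqref{main2.1}, when $-\Ddim < s < l-1$ I would combine the embedding with \eqref{main2} of Theorem \ref{mainthm} to get $\|f\|_{\dot{W}^{s,\infty}}(t) \le \|f\|_{s}(t) \lesssim 1$. For the decay estimate \eqref{main'}, when $0 \le s < l-1$ and $-\Ddim \le \rr < s$ I would instead invoke \eqref{main} to obtain $\|f\|_{\dot{W}^{s,\infty}}(t) \le \|f\|_{s}(t) \lesssim (1+t)^{-s+\rr}$. The hypotheses on the initial data---namely $\|f_{0}\|_{\rr} < \infty$ when $\rr > -\Ddim$, or $\|f_{0}\|_{-\Ddim,\infty} < \infty$ when $\rr = -\Ddim$---together with the stated dependence of the implicit constants are inherited verbatim from Theorem \ref{mainthm}.

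There is no genuine obstacle in this argument: the entire analytic content resides in Theorem \ref{mainthm}, and the corollary is nothing more than the composition of that result with a time-independent functional embedding. The only point meriting care is confirming that the embedding constant does not depend on $t$, which guarantees that the implicit constants in \eqref{main2.1} and \eqref{main'} carry exactly the dependencies listed in Theorem \ref{mainthm}.
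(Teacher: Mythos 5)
Your proposal is correct and matches the paper's own argument exactly: the paper derives this corollary by combining Theorem \ref{mainthm} with the Hausdorff--Young embedding $\|f\|_{\dot{W}^{s,\infty}} \leq \||\xi|^{s}\hat{f}(\xi)\|_{L^{1}_{\xi}} = \|f\|_{s}$ recorded in \eqref{embedlower}, applied pointwise in time. Your additional remark about the time-independence of the embedding constant is the right (and only) point of care, and it is handled correctly.
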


Thus, under the assumptions of Theorem \ref{mainthm}, defining $\nabla^{\alpha}f \eqdef \partial_{x_{1}}^{\alpha_{1}}\partial_{x_{2}}^{\alpha_{2}} f$ where $\alpha = (\alpha_{1},\alpha_{2})$ and $|\alpha| = \alpha_{1}+\alpha_{2}$, we know that, up to order $|\alpha| < l-1$, the derivatives $\|\nabla^{\alpha}f\|_{L^{\infty}}$ decay in time with the optimal linear decay rate.

\subsection{Strategy of proof} We first explain the 3D Muskat problem.  Our strategy of this proof is two-fold. We will first prove uniform bounds on $\|f\|_{s}$ for $-\Ddim < s<2$ and $\|f\|_{s,\infty}$ for $-\Ddim \le s<2$ including $s = -\Ddim$. Then afterwards we use these uniform bounds to prove the large time decay for $0\leq s < l-1$.

To this end we prove an embedding lemma, which allows us to bound $\|f\|_{s}$ for $-1< s < 2$ as
$$ 
\|f\|_{s} \lesssim \|f\|_{H^{3}}.
$$
Since our interface solution $f(x,t)$ is uniformly bounded under the $H^{l}$ Sobolev norm for some $l\geq 3$, we obtain uniform bounds on $\|f\|_{s}(t)$ for $-1< s < 2$. Now, we can use (\ref{1diffbound}) and the general decay Lemma \ref{decaylemma} to obtain an initial decay result for $0\leq s \leq 1$:
$$ \|f\|_{s} \lesssim (1+t)^{-s+\nu}$$
where $-1 < \nu < s$ and the implicit constant depends on $\|f_{0}\|_{\nu}$. We then will make use of this decay inequality for $s=1$ to prove uniform bounds for the range $-2<s<1$ as follows.

First, we need an appropriate bound on the time derivative of $\|f\|_{s}(t)$. To this end, we have the differential inequality
$$  \frac{d}{dt} \|f\|_{s} (t) +C\int_{\mathbb{R}^{2}} d\xi \ |\xi|^{s+1}|\hat{f}(\xi)|  \leq   \int_{\mathbb{R}^{2}} d\xi \ |\xi|^{s}|\mathscr{F}(N(f))(\xi)|.$$
After several computations we can bound the right hand side of the inequality as
$$
 \int_{\mathbb{R}^{2}} d\xi \ |\xi|^{s}|\mathscr{F}(N(f))(\xi)| \lesssim \|f\|_{1},
$$ 
where the implicit constant depends on $s$, $k_{0}$ and $\|f_{0}\|_{H^{3}}$. We then use the time decay of $\|f\|_{1}(t)$ from the previous step as  $\|f\|_{1}(t) \lesssim (1+t)^{-1+\nu}$ for $-1 < \nu < s$, to obtain after integrating in time that $\|f\|_{s}(t)$ is indeed uniformly bounded in time. We further a uniform bound for the case $s = -1$ by an interpolation argument.

Lastly in the endpoint case $s=-2$ we prove bounds for the norm $\|f\|_{-2,\infty}$.  To accomplish this goal we prove uniform bounds on the integral over each annulus $C_{j}$. 

Once we have these uniform bounds, we use the general decay Lemma \ref{decaylemma} to obtain the decay result for $0\leq s \leq 1$:
$$\|f\|_{s}(t) \lesssim (1+t)^{-s+\nu} $$ where $-2\leq \nu < s$ and $\|f_{0}\|_{\nu} < \infty$ for $\nu > -2$ and $\|f_{0}\|_{-2,\infty} < \infty$ for $\nu = -2$.

Finally, to obtain time decay results for $1 < s < l - 1$, we utilize the decay of the norm $\|f\|_{1}(t)$. We control the time derivative of $\|f\|_{s}(t)$:
$$ 
\frac{d}{dt}\int_{\mathbb{R}^{2}}|\xi|^{s}|\hat{f}| \ d\xi \leq -\int_{\mathbb{R}^{2}} d\xi \ |\xi|^{s+1}|\hat{f}(\xi)| + \int_{\mathbb{R}^{2}} d\xi \ |\xi|^{s}|\mathscr{F}(N(f))(\xi)|.
$$ 
Next, for suitably large times we carefully control $\int_{\mathbb{R}^{2}} d\xi \ |\xi|^{s}|\mathscr{F}(N(f))(\xi)$  relative to the negative quantity $-\int_{\mathbb{R}^{2}} d\xi \ |\xi|^{s+1}|\hat{f}(\xi)| = -\|f\|_{s+1}$ by using the previously established time decay rates.  This enables us to establish an inequality of the form:
\bea\label{inequalityfordecay}
\frac{d}{dt}\|f\|_{s}(t) \leq -\delta \|f\|_{s+1}(t)
\eea
given $t \geq T$ for some $T > 0$. We indeed get the existence of such a time $T > 0$ by proving that
$$\int_{\mathbb{R}^{2}} d\xi \ |\xi|^{s}|\mathscr{F}(N(f))(\xi) \leq \pi\sum_{n\geq1}a_{n}(2n+1)^{s}\|f\|_{1}^{2n}\|f\|_{s+1}.$$ 
Then due to the large time decay of $\|f\|_{1}(t)$, there exists a time $T > 0$ such that (\ref{inequalityfordecay}) does indeed hold. By our uniform bound on $\|f\|_{-2,\infty}$ and using the decay Lemma \ref{decaylemma}, we obtain the large time decay results for $1< s < l-1$.

\subsection{Outline of the rest of the article}
We now outline the structure of the remainder of the article. In Section \ref{sec.embedding}, we prove embedding theorems to gain upper bounds on the s-norms of $f$ by appropriate Lebesgue and Sobolev norms. In Section \ref{Decayin3D}, we prove the main results in for the 3D Muskat problem. We first prove the general decay lemma for the $\|\cdot\|_{s}$ norms, which we then use to prove uniform bounds on $\|f\|_{s}(t)$ for $-1<s<2$. Next, we prove uniform bounds for the range $-2<s \leq 1$. Finally, we tackle the endpoint $s=2$ case by proving a uniform bound on the Besov-type norm $\|f\|_{-2,\infty}$, which allows us to prove decay results of the form
$$\|f\|_{s} \lesssim (1+t)^{-s+\nu} $$ for $1< s<l-1$ and $-2\leq \nu < s$ when the initial data $f_{0} \in H^{l}(\mathbb{R}^{2})$ satisfies the conditions outlined in Proposition \ref{higherdecay3D}. Finally, in Section \ref{2Dsection}, we conclude by outlining the analogous 2D results.

\section{Embeddings for $\|\cdot\|_{s}$ and $\|\cdot\|_{s,\infty}$}\label{sec.embedding}
In this section, we prove embeddings for the norms $\|\cdot\|_{s}$ and $\|\cdot\|_{s,\infty}$. We will later use these embeddings to gain uniform control of $\|f\|_{s}$ over a certain range of $s$ given by the embedding lemmas. We bound $\|\cdot\|_{s}$ from above by Sobolev norms because the well-posedness result of \cite{CCGRPS} is proven in a $L^{2}$-Sobolev space. We prove a more general embedding:

\begin{lemma}\label{lem.bounds.upper}
	For $s > -\frac{\Ddim}{p}$ and $r>s+\Ddim/q$ and $p,q \in [1,2]$ we have the inequality
	\begin{equation}\label{s.emb}
		\|f\|_{s} \lesssim \|f\|_{L^p(\mathbb{R}^d)}^{1-\theta}
		\|f\|_{\dot{W}^{r,q}(\mathbb{R}^\Ddim)}^{\theta},
	\end{equation}
	where $\theta = \frac{s+\Ddim / p}{r+\Ddim \left(\frac{1}{p}- \frac{1}{q}\right)}\in (0,1)$.

		For $s = -\frac{\Ddim}{p}$ and $p\in [1,2]$  we further have the inequality
	\begin{equation}\label{s.inf.emb}
		\|f\|_{s,\infty} \lesssim \|f\|_{L^p(\mathbb{R}^d)}.
	\end{equation}
	In particular for $s=-\Ddim$ we take $p=1$.
\end{lemma}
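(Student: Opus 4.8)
The plan is to prove both inequalities by a simple frequency splitting: I cut the frequency integral at a radius $R>0$, estimate the low frequencies $\{|\xi|\le R\}$ against $\|f\|_{L^p}$ and the high frequencies $\{|\xi|>R\}$ against $\|f\|_{\dot W^{r,q}}$, and then optimize in $R$. Throughout I write $p'=p/(p-1)$ and $q'=q/(q-1)$ for the conjugate exponents, and I repeatedly use the Hausdorff--Young inequality $\|\hat g\|_{L^{p'}}\lesssim \|g\|_{L^p}$, which is available precisely because $p,q\in[1,2]$. The two hypotheses on $s$ and $r$ will turn out to be exactly the integrability conditions that make the two pieces finite, and the exponent $\theta$ will be forced by the optimization.

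For the low frequencies I would apply H\"older with exponents $p,p'$ and then Hausdorff--Young:
\[
\int_{|\xi|\le R}|\xi|^{s}|\hat f(\xi)|\,d\xi
\le
\left(\int_{|\xi|\le R}|\xi|^{sp}\,d\xi\right)^{1/p}\|\hat f\|_{L^{p'}}
\lesssim
\left(\int_{|\xi|\le R}|\xi|^{sp}\,d\xi\right)^{1/p}\|f\|_{L^p}.
\]
In polar coordinates $\int_{|\xi|\le R}|\xi|^{sp}\,d\xi\approx\int_0^R\rho^{sp+\Ddim-1}\,d\rho$, which converges at the origin exactly when $sp+\Ddim>0$, i.e. $s>-\Ddim/p$, and then equals a constant times $R^{sp+\Ddim}$; this gives the low-frequency bound $\lesssim R^{s+\Ddim/p}\|f\|_{L^p}$. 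For the high frequencies I factor $|\xi|^{s}=|\xi|^{s-r}\cdot|\xi|^{r}$, use that $|\xi|^{r}\hat f=\widehat{|\nabla|^{r}f}$, and apply H\"older with exponents $q,q'$ together with Hausdorff--Young to obtain
\[
\int_{|\xi|>R}|\xi|^{s}|\hat f(\xi)|\,d\xi
\le
\left(\int_{|\xi|>R}|\xi|^{(s-r)q}\,d\xi\right)^{1/q}\|f\|_{\dot W^{r,q}}.
\]
The radial integral $\int_R^\infty\rho^{(s-r)q+\Ddim-1}\,d\rho$ converges at infinity exactly when $(s-r)q+\Ddim<0$, i.e. $r>s+\Ddim/q$, giving $\lesssim R^{s-r+\Ddim/q}\|f\|_{\dot W^{r,q}}$.

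Combining the two pieces yields $\|f\|_{s}\lesssim R^{a}\|f\|_{L^p}+R^{-b}\|f\|_{\dot W^{r,q}}$ with $a=s+\Ddim/p>0$ and $b=r-s-\Ddim/q>0$. Minimizing the right-hand side over $R>0$, with minimizer $R\approx(\|f\|_{\dot W^{r,q}}/\|f\|_{L^p})^{1/(a+b)}$, produces the geometric-mean bound $\lesssim\|f\|_{L^p}^{b/(a+b)}\|f\|_{\dot W^{r,q}}^{a/(a+b)}$. Since $a+b=r+\Ddim(1/p-1/q)$ and $a=s+\Ddim/p$, the exponent of $\|f\|_{\dot W^{r,q}}$ is exactly $\theta=\frac{s+\Ddim/p}{r+\Ddim(1/p-1/q)}$, which lies in $(0,1)$ because $a,b>0$; this is \eqref{s.emb}.

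For the endpoint \eqref{s.inf.emb} with $s=-\Ddim/p$, the low-frequency integral above diverges logarithmically, which is precisely the reason for passing to the annular norm $\|\cdot\|_{s,\infty}$. On a single dyadic shell $C_j$ I would run the same H\"older and Hausdorff--Young estimate to get $\int_{C_j}|\xi|^{s}|\hat f|\,d\xi\le(\int_{C_j}|\xi|^{sp}\,d\xi)^{1/p}\|f\|_{L^p}$; since $sp+\Ddim=0$ the power integral is $\int_{2^{j-1}}^{2^j}\rho^{-1}\,d\rho=\log 2$, \emph{independent of} $j$. Taking the supremum over $j\in\mathbb Z$ then gives $\|f\|_{s,\infty}\lesssim\|f\|_{L^p}$, and the case $s=-\Ddim$ is simply the choice $p=1$. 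The only genuine subtlety is this endpoint: the scale invariance of the critical weight on each dyadic annulus is what rescues the estimate once the global integral fails, so that is the step I would be most careful to phrase precisely.
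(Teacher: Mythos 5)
Your proof is correct, and while it follows the same skeleton as the paper's proof (split the frequency integral at a radius $R$, bound the low piece by $R^{s+d/p}\|f\|_{L^p}$ and the high piece by $R^{s-r+d/q}\|f\|_{\dot{W}^{r,q}}$, then optimize in $R$ to produce $\theta$), the tools you use for the two pieces are genuinely different. The paper works dyadically: it proves the per-shell estimate $\int |\xi|^s\varphi_j(\xi)|\hat f(\xi)|\,d\xi \lesssim 2^{js}2^{jd/p}\|\triangle_j f\|_{L^p}$ via Bernstein and Hausdorff--Young, sums the low shells directly, and for the high shells uses Cauchy--Schwarz in $j$, Minkowski's inequality (valid since $q\le 2$), and the Littlewood--Paley square-function characterization of $\dot{W}^{r,q}$. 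You instead apply H\"older and Hausdorff--Young directly to the continuous integrals, and in particular handle the high frequencies by writing $|\xi|^s=|\xi|^{s-r}\cdot|\xi|^r$ and applying Hausdorff--Young to $|\nabla|^r f$ itself, so you never need Littlewood--Paley theory or the square-function characterization of $\dot{W}^{r,q}$ at all; this is more elementary and even slightly more robust, since that characterization is standard only for $1<q<\infty$ and is delicate at the endpoint $q=1$, which your argument covers without comment. What the paper's dyadic route buys in exchange is the estimate \eqref{cjupper} itself, which is reused later (for the endpoint bound \eqref{s.inf.emb}, exactly as you do shell-by-shell, and again in the proof of the interpolation inequality \eqref{Sinterpolate}), so the Littlewood--Paley formulation earns its keep elsewhere in the paper even though it is not needed for \eqref{s.emb}. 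Your endpoint argument coincides with the paper's: at $s=-d/p$ the weight $|\xi|^{sp}$ integrates to the same constant on every dyadic annulus, which is precisely why the supremum over shells is finite while the global integral is not.
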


\begin{remark}
In particular for $s > -\frac{\Ddim}{2}$  then \eqref{s.emb} implies that 
\begin{equation} \label{hembed}
		\|f\|_{s} \lesssim \|f\|_{H^r(\mathbb{R}^d)} \quad (r > s +\Ddim/2).
\end{equation}

For exponents  $1\leq p \leq 2$, $r > s+\frac{d}{p}$ and $s > -\frac{d}{p}$, we also conclude
	\begin{equation}\notag
	  \|f\|_{s} \lesssim \|f\|_{W^{r,p}(\mathbb{R}^{\Ddim})}.
	\end{equation}
This follows directly from \eqref{s.emb}.

Also notice that  generally for $s \in (-\Ddim, - \Ddim/2]$ in \eqref{s.emb} we require $p \in [1, - \Ddim/s)$;  in particular this does not include $p=2$.  
\end{remark}

\begin{remark}
We very briefly introduce the Littlewood-Paley operators, $\triangle_{j}$ for $j\in\mathbb{Z}$ is defined on the Fourier side by
$$ 
\widehat{\triangle_{j}f} =\varphi_j \hat{f} = \varphi(2^{-j}\xi) \hat{f}(\xi),
$$ 
where $\varphi :\mathbb{R}^{\Ddim}\rightarrow [0,1]$ is a standard non-zero test function which is supported inside the annulus $\tilde{C}_{1} = \{3/4 \leq |\xi| \leq 8/3\}$ which contains the annulus $C_{1}$ (defined just below \eqref{normSinfty}).  The test function $\varphi$ is then normalized as 
$
\sum_{j\in \mathbb{Z}} \varphi(2^{-j} \xi) = 1 
$
$
\forall \xi \ne 0.
$
\end{remark}

\begin{proof} 
We use the Littlewood-Paley operators to obtain the estimate
\begin{equation}\notag
\int_{\mathbb{R}^\Ddim} |\xi|^{s} \varphi_j(\xi) |\hat{f}(\xi)| \ d\xi 
\approx  2^{js} \int_{\mathbb{R}^\Ddim}  \varphi_j(\xi)|\hat{f}(\xi)| \ d\xi.
\end{equation}
We then apply the Bernstein inequality followed by the Hausdorff-Young inequality to $\int_{\mathbb{R}^\Ddim}  \varphi_j(\xi) |\hat{f}(\xi)| \ d\xi$ to obtain for any $1 \le p \le 2$ and $\frac{1}{p}+\frac{1}{p'}=1$ that
\begin{equation}\label{cjupper}
\int_{\mathbb{R}^\Ddim} |\xi|^{s} \varphi_j(\xi) |\hat{f}(\xi)| \ d\xi 
\lesssim
  2^{js} 2^{j\frac{d}{p}} \|\widehat{\triangle_{j}f}\|_{L^{p'}(\mathbb{R}^d)}
  \lesssim
  2^{js} 2^{j\frac{d}{p}} \|\triangle_{j}f\|_{L^{p}(\mathbb{R}^d)}.
\end{equation}
Next we sum \eqref{cjupper} separately over $2^j \le R$ and $2^j > R$ for some $R>0$ to be chosen 
\begin{equation}\label{jlow}
\int_{|\xi| \le R} |\xi|^{s}|\hat{f}(\xi)| \ d\xi 
\lesssim  
\sum_{2^j \le R}2^{js} 2^{j\frac{d}{p}} \|\triangle_{j} f\|_{L^p(\mathbb{R}^\Ddim)}
	\lesssim
	R^{s+\Ddim/p}
	\| f\|_{L^p(\mathbb{R}^d)}.
\end{equation}
The last inequality holds for $s+\Ddim/p >0$.  

Now we sum \eqref{cjupper} over  $2^j > R$ and choose a possibly different $p=q$ to obtain
\begin{multline}\label{jhigh}
\int_{|\xi| > R} |\xi|^{s}  |\hat{f}(\xi)| \ d\xi 
\lesssim  
\sum_{2^j > R}2^{js} 2^{j\frac{\Ddim}{q}} \|\triangle_{j} f\|_{L^q(\mathbb{R}^\Ddim)}
\\
	\lesssim
	\left( \sum_{2^j > R}  2^{2 j\left( -r+s +\frac{\Ddim}{q}\right)} \right)^{1/2}
	\left( \sum_{2^j > R}  2^{2jr}\|\triangle_{j} f\|_{L^q(\mathbb{R}^\Ddim)}^2 \right)^{1/2}
			\\
	\lesssim
	R^{-r+s+\Ddim/q}
	\left\|\left( \sum_{2^j > R}  2^{2jr} \left|\triangle_{j} f\right|^2 \right)^{1/2} \right\|_{L^q(\mathbb{R}^\Ddim)}
			\\
	\lesssim
	R^{-r+s+\Ddim/q}
	\|f\|_{\dot{W}^{r,q}(\mathbb{R}^\Ddim)}.
\end{multline}
Here we used the Bernstein inequalities and the Minkowski inequality for norms since $1\le q \le 2$.  We further used the Littlewood-Paley characterization of $\dot{W}^{r,q}(\mathbb{R}^\Ddim)$.  
This inequality holds as soon as $r>s+\Ddim/q$.

Now to establish \eqref{s.emb}, in  \eqref{jlow} and \eqref{jhigh} we further choose
$$
R^{r+\Ddim \left(\frac{1}{p}-\frac{1}{q} \right)}
=
 \frac{\|f\|_{\dot{W}^{r,q}(\mathbb{R}^\Ddim)}}{ \| f\|_{L^p(\mathbb{R}^d)}},
$$
and then add the two inequalities together.

Lastly we show \eqref{s.inf.emb} by choosing $s = -\frac{\Ddim}{p}$ and $p\in [1,2]$ in \eqref{cjupper}.  
\end{proof}

We can also obtain lower bounds for the norms $\|f\|_{s}$ and $\|f\|_{s,\infty}$.  In particular 
\bea \label{embedlower}
	\|f\|_{\dot{W}^{s,\infty}(\mathbb{R}^\Ddim)} \lesssim \|f\|_{s}, 
\eea
which holds for any $s > -\Ddim$.	This inequality follows directly from the Hausdorff-Young inequality as
$
\|f\|_{\dot{W}^{s,\infty}} \leq  \||\xi|^{s}\hat{f}(\xi)\|_{L^{1}_{\xi}} = \|f\|_{s}.
$ 

We also have a lower bound given by the Besov norm:
$$ 
\|f\|_{\dot{B}^{s}_{\infty,\infty}} \eqdef \Big\|2^{js} \|\triangle_{j}f\|_{L^{\infty}}\Big\|_{l_j^{\infty}(\mathbb{Z})}
$$ 
For this norm, we have the following estimate by the Hausdorff-Young inequality:
\begin{equation}
  \|f\|_{\dot{B}^{s}_{\infty,\infty}} 
\lesssim  
\Big\|2^{js} \|\varphi(2^{-j}\xi) \hat{f}(\xi)\|_{L^{1}}\Big\|_{l_j^{\infty}} 
\lesssim \|f\|_{s,\infty}.
\end{equation}
And this holds for any $s \ge -\Ddim$ (including $s = -\Ddim$).

We point out here that one can interpolate between the $\|\cdot\|_{s}$ norms  as
\begin{equation} \label{Sinterpolate}
  \|f\|_{s} 
 \lesssim
 \|f\|_{\ss_1,\infty}^\theta \|f\|_{\ss_2,\infty}^{1-\theta}, 
 \quad 
 \ss_1 < s < \ss_2, \quad \theta = \frac{\ss_2-s}{\ss_2 - \ss_1}
\end{equation}
This inequality \eqref{Sinterpolate} can be seen in \cite[Lemma 4.2]{SohingerStrain}.  We will however give a short proof of \eqref{Sinterpolate} for completeness.  First notice that \eqref{Sinterpolate}  and \eqref{ineqbd} imply
\begin{equation} \notag 
  \|f\|_{s} 
 \lesssim
 \|f\|_{\ss_1}^\theta \|f\|_{\ss_2}^{1-\theta}, 
 \quad 
 \ss_1 \le s \le \ss_2, \quad \theta = \frac{\ss_2-s}{\ss_2 - \ss_1}.
\end{equation}
These inequalities show that if we have uniform control on for example $\|f\|_{1}$ and $\|f\|_{-2,\infty}$, then we also have uniform bounds on $\|f\|_{s}$ for $-2 < s \le 1$.

Now we prove \eqref{Sinterpolate}.  
For $R >0$ to be chosen later, using \eqref{cjupper} we expand out
$$
 \|f\|_{s}
\lesssim
 \sum_{j \in \mathbb{Z}}  \int_{\mathbb{R}^\Ddim} |\xi|^{s} \varphi_j(\xi) |\hat{f}(\xi)| \ d\xi 
 \lesssim
 \sum_{j \in \mathbb{Z}}  2^{js}  \|\triangle_{j}f\|_{L^1(\mathbb{R}^d)}
  \lesssim 
  \sum_{2^j \ge R} + \sum_{2^j < R}.
$$
For the first term
$$
\sum_{2^j \ge R} \lesssim \|f\|_{\ss_2, \infty}  \sum_{2^j \ge R} 2^{j(s-\ss_2)}  
\lesssim \|f\|_{\ss_2, \infty}   R^{s-\ss_2}  
$$
For the second term
$$
\sum_{2^j < R} \lesssim \|f\|_{\ss_1, \infty}  \sum_{2^j < R} 2^{j(s-\ss_1)}  
\lesssim \|f\|_{\ss_2, \infty}   R^{s-\ss_1}.  
$$
Then choose $R = \left( \|f\|_{\ss_2, \infty}/ \|f\|_{\ss_1, \infty} \right)^{1/(\ss_2-\ss_1)}$ to establish \eqref{Sinterpolate}.

Having established the relevant norm inequalities, we now move onto the proof of our main result.

\section{Decay in 3D}\label{Decayin3D}

We prove the decay results for the 3D Muskat problem in this section. First, we will establish a decay lemma, which will allow us to use the bounds we prove on $\|f\|_{s}$ and $\|f\|_{s,\infty}$ to obtain decay results for the interface. Next, we use the embedding theorems to get uniform bounds on $\|f\|_{s}$ for $-1<s <2$ and we use the decay lemma to get decay of the quantity $\|f\|_{1}(t)$. Finally, we use this decay to get new uniform bounds on $\|f\|_{s}$ for $-2<s\leq -1$ and $\|f\|_{-2,\infty}$. We conclude by using these new uniform bounds to prove faster time decay on $\|f\|_{s}(t)$ for the range of $0\leq s \leq  l-1$.

\subsection{The Decay Lemma}
In this section we now prove the  general decay lemma.  We will for now continue to work in $\mathbb{R}^\Ddim$ for an integer dimension $\Ddim \ge 1$.  In the next sub-sections we will use the following decay lemma to prove uniform bounds and decay in the $\|\cdot\|_{s}$ norm.  The following lemma proves a general time decay rate for solutions to the given differential inequality.

\begin{lemma}\label{decaylemma}
Suppose $g=g(t,x)$ is a smooth function with $g(0,x) = g_0(x)$ and assume that for some $\ss \in \mathbb{R}$, $\|g_0\|_{\ss} < \infty$ and 
$\|g(t)\|_{\rr,\infty} \leq C_{0}$ for some $\rr \ge -\Ddim$ satisfying  $ \rr < \ss$.
Let the following differential inequality hold for some $C>0$:
	$$
	\frac{d}{dt}\|g\|_{\ss} \leq -C \|g\|_{\ss +1}.
	$$ 
	  Then we have the uniform in time estimate
	$$ 
	\|g\|_{\ss}(t) \lesssim (1+t)^{-\ss+\rr}.
	$$
\end{lemma}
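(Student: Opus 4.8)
The plan is to collapse the given differential inequality into a single autonomous, superlinear ODE inequality for the scalar quantity $y(t) \eqdef \|g\|_{\ss}(t)$ by means of a Nash-type interpolation, and then integrate it directly. The uniform bound $\|g\|_{\rr,\infty}\le C_0$ and the interpolation inequality \eqref{Sinterpolate} are the substantive inputs; everything else is bookkeeping.

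First I would interpolate the $\ss$-norm between the uniformly controlled lower norm $\|g\|_{\rr,\infty}$ and the higher norm $\|g\|_{\ss+1}$ appearing on the right of the hypothesis. Applying \eqref{Sinterpolate} with $\ss_1=\rr$, $s=\ss$, $\ss_2=\ss+1$ (legitimate since the hypothesis $\rr<\ss$ gives the required strict ordering $\rr<\ss<\ss+1$), and then dominating $\|g\|_{\ss+1,\infty}\le\|g\|_{\ss+1}$ via \eqref{ineqbd}, yields
\[
\|g\|_{\ss}\lesssim \|g\|_{\rr,\infty}^{\theta}\,\|g\|_{\ss+1}^{1-\theta},\qquad \theta=\frac{1}{\ss+1-\rr}\in(0,1).
\]
Solving for the higher norm and inserting the hypothesis gives a lower bound: since the exponent $-\theta/(1-\theta)$ is negative, the upper bound $\|g\|_{\rr,\infty}\le C_0$ translates into $\|g\|_{\rr,\infty}^{-\theta/(1-\theta)}\ge C_0^{-\theta/(1-\theta)}$, and hence
\[
\|g\|_{\ss+1}\gtrsim C_0^{-\theta/(1-\theta)}\,\|g\|_{\ss}^{\,p},\qquad p\eqdef\frac{1}{1-\theta}=\frac{\ss+1-\rr}{\ss-\rr}>1.
\]

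Substituting this lower bound into the differential inequality $\frac{d}{dt}\|g\|_{\ss}\le -C\|g\|_{\ss+1}$ produces $y'(t)\le -A\,y(t)^{p}$ with $A\eqdef c\,C\,C_0^{-\theta/(1-\theta)}>0$ a time-independent constant and $p>1$. I would integrate this in the standard way: setting $z=y^{1-p}$ and using $y'\le -Ay^{p}$ gives $z'=(1-p)y^{-p}y'\ge (p-1)A$, so $z(t)\ge z(0)+(p-1)At$; here $z(0)=\|g_0\|_{\ss}^{\,1-p}$ is a finite positive constant because $\|g_0\|_{\ss}<\infty$. Since $1/(p-1)=\ss-\rr$, inverting yields
\[
\|g\|_{\ss}(t)=y(t)\le\bigl(z(0)+(p-1)At\bigr)^{-(\ss-\rr)}\lesssim (1+t)^{-\ss+\rr},
\]
which is the claimed rate.

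The main things needing care are at the ODE level rather than in any hard estimate. One must justify dividing by $y^{p}$ when passing to $z=y^{1-p}$, i.e.\ that $y(t)>0$; this is handled by noting $y$ is nonincreasing so it is either strictly positive for all $t$ or the estimate is trivially true once it vanishes, or alternatively by the regularization $y\mapsto y+\varepsilon$ followed by $\varepsilon\to0$. The only genuine obstacle is obtaining the superlinear exponent $p>1$ in the first place, which is exactly what the interpolation \eqref{Sinterpolate} combined with the uniform control $\|g\|_{\rr,\infty}\le C_0$ provides; once $p>1$ is in hand the decay is forced, and the exponent identity $1/(p-1)=\ss-\rr$ is precisely what reproduces the optimal linear rate $(1+t)^{-\ss+\rr}$.
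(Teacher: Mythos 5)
Your proof is correct, but it takes a genuinely different route from the paper's. The paper proves the lemma by a Fourier-splitting argument in the style of Schonbek: it splits frequency space at the time-dependent radius $|\xi|\sim(1+\delta t)^{-1}$, bounds the low-frequency contribution by $\|g\|_{\rr,\infty}(1+\delta t)^{-(\ss-\rr)}$ using the hypothesis $\|g\|_{\rr,\infty}\le C_0$, and thereby converts the assumed differential inequality into a \emph{linear} inequality with time-dependent damping,
$$
\frac{d}{dt}\|g\|_{\ss} + C(1+\delta t)^{-1}\|g\|_{\ss} \lesssim C_{0}\,(1+\delta t)^{-1-(\ss-\rr)},
$$
which it then integrates via the integrating factor $(1+\delta t)^{a}$ with $a\delta=C$ and $a>\ss-\rr$. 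You instead combine the interpolation inequality \eqref{Sinterpolate} (which the paper establishes but does not use in this proof) with \eqref{ineqbd} and the uniform bound $\|g\|_{\rr,\infty}\le C_0$ to produce the autonomous superlinear ODE $y'\le -Ay^{p}$, $p=\frac{\ss+1-\rr}{\ss-\rr}$, and integrate it directly, the identity $1/(p-1)=\ss-\rr$ giving exactly the claimed rate; your exponent bookkeeping ($\theta=\frac{1}{\ss+1-\rr}$, $\theta/(1-\theta)=\frac{1}{\ss-\rr}$) checks out. Both arguments use the Besov-type control in an essential way and yield the same rate with constants of comparable quality (depending on $\|g_0\|_{\ss}$ and $C_0$). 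What each buys: your route is shorter and more modular once \eqref{Sinterpolate} is available, and it makes transparent that the decay rate is forced purely by the interpolation exponent; the paper's splitting argument remains linear in $\|g\|_{\ss}$, so it never divides by a possibly vanishing quantity (the $y>0$ caveat you correctly handle simply never arises), and its frequency localization is the same mechanism reused elsewhere in the paper (e.g.\ in the endpoint estimates on the annuli $C_j$), so it fits the paper's toolkit more uniformly.
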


\begin{remark}\label{remarkdecay}
Note that by \eqref{ineqbd} we have $\|f\|_{\rr,\infty}(t) \leq \|f\|_{\rr}(t)$. Therefore we can use Lemma \ref{decaylemma} if we can bound $\|f\|_{\rr}(t)$ for $\rr > -\Ddim$ uniformly in time.
\end{remark}

\begin{proof}
	For some $\delta, \kappa>0$ to be chosen, we initially observe that
	\begin{align*}
	\|g\|_{\kappa} &=\int_{\mathbb{R}^{d}}|\xi|^{\kappa}|\hat{g}(\xi)|d\xi \\
	&\geq \int_{|\xi|>(1+\delta t)^{s}}|\xi|^{\kappa}|\hat{g}(\xi)|d\xi \\
	&\geq (1+\delta t)^{s\beta}\int_{|\xi|>(1+\delta t)^{s}}|\xi|^{\kappa-\beta}|\hat{g}(\xi)|d\xi\\
	&= (1+\delta t)^{s\beta}\Big(\|g\|_{\kappa-\beta} \  - \int_{|\xi|\leq(1+\delta t)^{s}}|\xi|^{\kappa-\beta}|\hat{g}(\xi)|d\xi\Big)
	\end{align*}
	Using this inequality with $\kappa = \ss + 1$ and $\beta = 1$, we obtain that
\begin{multline*}
\frac{d}{dt}\|g\|_{\ss} + C(1+\delta t)^{s}\|g\|_{\ss} \leq -C\|g\|_{\ss+1} + C(1+\delta t)^{s}\|g\|_{\ss} 
\\
\leq C(1+\delta t)^{s}\int_{|\xi| \leq(1+\delta t)^{s}}|\xi|^{\ss}|\hat{g}(\xi)|d\xi.
\end{multline*}	
	Then, using the sets $C_j$ as in \eqref{normSinfty} and defining $\chi_{S}$ to be the characteristic function on a set $S$, the upper bound in the last inequality can be bounded as follows
	\begin{align*}
	\int_{|\xi|\leq(1+\delta t)^{s}}|\xi|^{\ss}|\hat{g}(\xi)|d\xi
	&= \sum_{j\in\mathbb{Z}} \int_{C_{j}}\chi_{\{|\xi|\leq (1+\delta t)^{s}\}}|\xi|^{\ss}|\hat{g}| \ d\xi\\ 
	&\approx \sum_{2^{j}\leq (1+\delta t)^{s}} \int_{C_{j}} |\xi|^{\ss}|\hat{g}| \ d\xi\\
	&\lesssim \|g\|_{\rr,\infty}\sum_{2^{j}\leq (1+\delta t)^{s}} 2^{j(\ss-\rr)}\\
	&\lesssim \|g\|_{\rr,\infty} (1+\delta t)^{s(\ss-\rr)}\sum_{2^{j}(1+\delta t)^{-s}\leq 1} 2^{j(\ss-\rr)}(1+\delta t)^{-s(\ss-\rr)} \\
	&\lesssim \|g\|_{\rr,\infty}(1+\delta t)^{s(\ss-\rr)}
	\end{align*}
	where the implicit constant in the inequalities do not depend on $t$.  In particular we have used that the following uniform in time estimate holds
	$$
	\sum_{2^{j}(1+\delta t)^{-s}\leq 1} 2^{j(\ss-\rr)}(1+\delta t)^{-s(\ss-\rr)} \lesssim 1.
	$$
	Combining the above inequalities, we obtain that
	\bea\label{use}
	\frac{d}{dt}\|g\|_{\ss} + C(1+\delta t)^{s}\|g\|_{\ss} \lesssim C_0 (1+\delta t)^{s}(1+\delta t)^{s(\ss-\rr)}.
	\eea
In the following estimate will use \eqref{use} with $s=-1$, we suppose $a > \ss-\rr >0$, and we choose $\delta > 0$ such that $a\delta = C$.  We then obtain that
	\begin{align*}
	\frac{d}{dt}((1+\delta t)^{a}\|g\|_{\ss}) &=  (1+\delta t)^{a}\frac{d}{dt}\|g\|_{\ss} + a\delta \|g\|_{\ss}(1+\delta t)^{a-1} \\
	&=  (1+\delta t)^{a}\frac{d}{dt}\|g\|_{\ss} + C \|g\|_{\ss}(1+\delta t)^{a-1} \\
	&\leq (1+\delta t)^{a}\Big(\frac{d}{dt}\|g\|_{\ss} + C(1+\delta t)^{-1}\|g\|_{\ss}\Big) \\
	&\lesssim C_0 (1+\delta t)^{a-1-(\ss-\rr)}
	\end{align*}
	Since $a > \ss-\rr$, we integrate in time to obtain that
	$$ 
	(1+\delta t)^{a}\|g\|_{\ss} \lesssim \frac{C_0}{\delta} (1+\delta t)^{a-(\ss-\rr)}.
	$$ We conclude our proof by dividing both sides of the inequality by $(1+\delta t)^{a}$.
\end{proof}

Lemma \ref{decaylemma} shows that to prove the time decay rates claimed in Theorem \ref{mainthm} then it is sufficient to establish suitable differential inequalities and also to prove uniform in time bounds  on the norms $\| \cdot \|_{s}$.    

Starting now we will switch our focus to only talking about the 3D case (with $\Ddim =2$) in \eqref{interfaceq}.  Looking at establishing the differential inequality first, from \cite{JEMS,CCGRPS} we have the differential inequality \eqref{1diffbound} for $\| \cdot \|_{1}$.  Furthermore, from \cite{CCGRPS}, we also know that for $0<\delta < 1$ and $k_{0}$ satisfying (\ref{k0}) that
\bea \notag
\|f\|_{1+\delta}(t) + \mu \int_{0}^{t}\ ds \ \|f\|_{2+\delta}(s) \leq \|f_{0}\|_{1+\delta}
\eea
where $\mu>0$ depends on $\|f_{0}\|_{1}$.  It is also shown in \cite{CCGRPS} that
\bea\label{sobolevuniform}
\|f\|_{H^{l}(\mathbb{R}^2)}(t) \leq \|f_{0}\|_{H^{l}(\mathbb{R}^2)}\exp(CP(k_{0})\|f_{0}\|_{1+\delta}/\mu),
\quad l \ge 3,
\eea
where $C>0$ is a constant and $P(k_{0})$ is a polynomial in $k_{0}$.   Furthermore following the exact proof of \eqref{1diffbound} in \cite{JEMS,CCGRPS} one can directly observe that
\bea\label{1Sdiffbound}
\frac{d}{dt}\|f\|_{s}(t) \leq -C \|f\|_{s+1},  \quad 0 \le s \le 1.
\eea
We will use this differential inequality in the following to prove the time decay rates in Theorem \ref{mainthm}.  Later in the proof of Proposition \ref{higherdecay3D} we will establish \eqref{1Sdiffbound} for $1 \le s \le l - 1$. First, we use (\ref{sobolevuniform}), (\ref{hembed}) and (\ref{1Sdiffbound}) to obtain uniform bounds on $\|f\|_{s}(t)$ in the range $-1<s < 2$ and an initial decay result for $\|f\|_{s}(t)$ in the range $0\leq s \leq 1$.

\subsection{Uniform Bounds for $-1<s<2$}

In this subsection we will establish uniform in time bounds for $\|f\|_{s}(t)$ when $-1<s<2$ and then we use those to prove an time decay for $\|f\|_{s}(t)$ when $s \in [0, 1]$.

Lemma \ref{lem.bounds.upper}, in particular \eqref{hembed}, immediately grants the following corollary.
\begin{cor}\label{initialboundcor}
Suppose $f$ is the solution to the Muskat problem \eqref{interfaceq} in 3D  described by Theorem \ref{oldthm3D}.
Then, for $-1<s<2$, we have the uniform in time estimate
$$
\|f\|_{s}(t)\lesssim 1.
$$
Here the implicit constant depends upon $\|f_0\|_{H^3}$.
\end{cor}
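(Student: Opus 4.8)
The plan is to combine the embedding inequality \eqref{hembed} with the uniform-in-time Sobolev bound \eqref{sobolevuniform}. Since we are now in 3D we set $\Ddim = 2$, so that \eqref{hembed} reads: for every $s > -1$ we have $\|f\|_{s} \lesssim \|f\|_{H^{r}}$ whenever $r > s + 1$. The whole content of the corollary is that the range $-1 < s < 2$ is exactly the range for which the fixed Sobolev space $H^{3}$ suffices on the right-hand side, after which the uniform $H^{3}$ bound does the rest.

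First I would fix $s$ with $-1 < s < 2$ and note that the lower endpoint constraint $s > -1$ is precisely the hypothesis $s > -\Ddim/2$ required by \eqref{hembed} (equivalently, it is what makes the low-frequency geometric sum in \eqref{jlow} converge). Next, since $s < 2$ we have $s + 1 < 3$, so the choice $r = 3$ satisfies the remaining hypothesis $r > s + 1$ of \eqref{hembed}. Applying \eqref{hembed} with this $r$ yields, for every $t \ge 0$,
$$
\|f\|_{s}(t) \lesssim \|f\|_{H^{3}}(t),
$$
with an implicit constant depending only on $s$ (through $\theta$) and the dimension.

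Finally, I would invoke \eqref{sobolevuniform} with $l = 3$, which gives the uniform-in-time estimate $\|f\|_{H^{3}}(t) \le \|f_{0}\|_{H^{3}}\exp\!\big(CP(k_{0})\|f_{0}\|_{1+\delta}/\mu\big)$; this right-hand side is finite and independent of $t$ because $f_{0} \in H^{l}(\mathbb{R}^{2})$ with $l \ge 3$ and $k_{0}$ satisfies \eqref{k0}. Chaining the two displays produces $\|f\|_{s}(t) \lesssim 1$ uniformly in $t$, with the implicit constant depending on $\|f_{0}\|_{H^{3}}$ (and on $k_{0}$, $\delta$, $\mu$), as claimed. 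There is no real obstacle here: the result is immediate once the two cited inequalities are in hand. The only point requiring a moment's care is the bookkeeping of the exponent ranges — specifically that $s < 2$ is exactly the threshold ensuring $r = 3 > s + 1$, so that no Sobolev regularity beyond $H^{3}$ is needed for this range of $s$.
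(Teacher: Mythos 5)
Your proposal is correct and follows essentially the same route as the paper: apply the embedding \eqref{hembed} (with $\Ddim=2$, $r=3$, valid exactly for $-1<s<2$) and then the uniform-in-time Sobolev bound \eqref{sobolevuniform}. The only detail the paper makes slightly more explicit is that the finiteness of $\|f_0\|_{1+\delta}$ appearing in \eqref{sobolevuniform} itself follows from the same embedding, $\|f_0\|_{1+\delta}\lesssim\|f_0\|_{H^3}$, which your argument uses implicitly.
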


\begin{proof}
By (\ref{sobolevuniform}), we know that $\|f\|_{H^{3}}(t)$ is uniformly bounded in time since from 
\eqref{s.emb} we have $\|f_0\|_{1+\delta} \lesssim \|f_0\|_{H^{3}}$. 
Further directly from \eqref{s.emb} we have that
$$
\|f\|_{s}(t) \lesssim \|f\|_{H^{3}}(t) \lesssim 1
$$ 
holds uniformly in time for $-1 < s < 2$.
\end{proof}

We now apply the decay Lemma \ref{decaylemma} to the special case $\ss =s \in [0, 1]$, then using also Corollary \ref{initialboundcor} we obtain 

\begin{prop}
Suppose $f$ is the solution to the Muskat problem \eqref{interfaceq} in 3D  described by Theorem \ref{oldthm3D}.
	Then for $s \in [0, 1]$ we have the uniform in time estimate
	\bea\label{initialdecay}
	\|f\|_{s} \lesssim (1+t)^{-s+\rr},
	\eea
	for any $-1 < \rr < s$; above the implicit constant depends on $\|f_{0}\|_{\rr}$.  
\end{prop}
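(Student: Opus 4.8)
The plan is to obtain \eqref{initialdecay} by a direct application of the general decay Lemma \ref{decaylemma}, taking $g = f$ and $\ss = s$. For each fixed $s \in [0,1]$ and each $\rr$ with $-1 < \rr < s$, I only need to check the three hypotheses of that lemma: the initial finiteness $\|f_0\|_{s} < \infty$, the differential inequality $\frac{d}{dt}\|f\|_{s} \le -C\|f\|_{s+1}$, and the uniform-in-time bound $\|f(t)\|_{\rr,\infty} \le C_0$ for some $\rr \ge -\Ddim$ with $\rr < \ss$. Note that the constraint $\rr > -1 > -2 = -\Ddim$ automatically satisfies the requirement $\rr \ge -\Ddim$ of the lemma, and $\rr < s = \ss$ is exactly the imposed range.

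The first two hypotheses are immediate from what is already established. Since $0 \le s \le 1$ lies in the range $-1 < s < 2$, the embedding \eqref{hembed} gives $\|f_0\|_{s} \lesssim \|f_0\|_{H^3} < \infty$, because $f_0 \in H^l(\mathbb{R}^2)$ with $l \ge 3$. The required differential inequality is precisely \eqref{1Sdiffbound}, which is valid for $0 \le s \le 1$.

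It remains to produce the uniform bound $\|f(t)\|_{\rr,\infty} \le C_0$ for the chosen $\rr \in (-1, s)$. Here I would invoke Corollary \ref{initialboundcor}: since $-1 < \rr < s \le 1 < 2$, that corollary yields $\|f\|_{\rr}(t) \lesssim 1$ uniformly in time, and then \eqref{ineqbd} gives $\|f\|_{\rr,\infty}(t) \le \|f\|_{\rr}(t) \lesssim 1$, so $C_0 < \infty$. To pin down the claimed dependence of the implicit constant on $\|f_0\|_{\rr}$, I would split into two cases. When $0 \le \rr < s$, the inequality \eqref{1Sdiffbound} applied at level $\rr$ shows that $t \mapsto \|f\|_{\rr}(t)$ is non-increasing, whence $\|f\|_{\rr,\infty}(t) \le \|f\|_{\rr}(t) \le \|f_0\|_{\rr}$ and one may take $C_0 = \|f_0\|_{\rr}$. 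When $-1 < \rr < 0$, the uniform bound from Corollary \ref{initialboundcor}, together with the embedding $\|f_0\|_{\rr} \lesssim \|f_0\|_{H^3}$, supplies a $C_0$ controlled by the initial data in the required norm.

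With all three hypotheses verified, Lemma \ref{decaylemma} immediately delivers $\|f\|_{s}(t) \lesssim (1+t)^{-s+\rr}$, which is \eqref{initialdecay}. I expect no genuine obstacle in this argument: it is essentially a bookkeeping check that the exponents meet the conditions $\rr \ge -\Ddim$ and $\rr < \ss$ demanded by the decay lemma, and that the differential inequality and the uniform bound are in force over the relevant ranges of $s$. The one point requiring a little care is tracking the norm in which the implicit constant is measured, namely confirming that $C_0$ may be taken to depend on $\|f_0\|_{\rr}$ rather than on the full $H^3$ norm, which is why I separate the cases $\rr \ge 0$ and $\rr < 0$ above.
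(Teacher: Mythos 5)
Your proposal is correct and is exactly the paper's argument: the paper obtains \eqref{initialdecay} by applying Lemma \ref{decaylemma} with $\ss = s \in [0,1]$, using the differential inequality \eqref{1Sdiffbound} together with Corollary \ref{initialboundcor} and the comparison \eqref{ineqbd} (see Remark \ref{remarkdecay}) to supply the uniform bound on $\|f\|_{\rr,\infty}(t)$. Your extra case-splitting to make the constant depend on $\|f_{0}\|_{\rr}$ is harmless bookkeeping (and in the case $-1<\rr<0$ your constant, like the paper's, really comes from Corollary \ref{initialboundcor} and hence involves $\|f_0\|_{H^3}$ as well), but the substance of the proof coincides with the paper's.
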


Having established some decay of the interface, we will now be able to use the decay for the specific case $s=1$ to prove uniform bounds for $-2<s\leq -1$.

\subsection{Uniform Bounds for $-2<s\leq -1$}	
For the 3D Muskat problem \eqref{interfaceq}, 
we will use the time decay estimate (\ref{initialdecay}) to prove uniform in time bounds for $\|f\|_{s}$ for $-2 < s \leq -1$. First, we establish the following estimate

\begin{prop}\label{prop9}
Suppose $f$ is the solution to the Muskat problem \eqref{interfaceq} in 3D  described by Theorem \ref{oldthm3D} with $\|f_{0}\|_{s}< \infty$ for some $-2<s<-1$.   Then 
\bea\label{decay1}
\frac{d}{dt} \|f \|_{s} (t) \lesssim \|f\|_{1},
\eea
where the implicit constant depends on $s$, $k_{0}$ and $\|f_{0}\|_{H^{3}}$.
\end{prop}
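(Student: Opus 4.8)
The plan is to bound the right-hand side $\int_{\mathbb{R}^{2}}|\xi|^{s}|\mathscr{F}(N(f))(\xi)|\,d\xi$ by $\|f\|_{1}$ and then discard the dissipative term. Starting from $f_{t}=-\Lambda f-N(f)$ and using $\widehat{f_{t}}=-|\xi|\hat{f}-\mathscr{F}(N(f))$, I would first record
\[
\frac{d}{dt}\|f\|_{s}\leq -\|f\|_{s+1}+\int_{\mathbb{R}^{2}}|\xi|^{s}|\mathscr{F}(N(f))(\xi)|\,d\xi\leq \int_{\mathbb{R}^{2}}|\xi|^{s}|\mathscr{F}(N(f))(\xi)|\,d\xi,
\]
which follows from $\partial_{t}|\hat{f}|\leq -|\xi||\hat{f}|+|\mathscr{F}(N(f))|$ (the hypothesis $\|f_{0}\|_{s}<\infty$ justifies differentiating under the integral). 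Thus everything reduces to proving $\int |\xi|^{s}|\mathscr{F}(N(f))|\,d\xi\lesssim\|f\|_{1}$ for $-2<s<-1$.

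The structural observation I would exploit is that $N(f)$ is in divergence form. Since $R=\mathcal{R}'$ with $\mathcal{R}(t)\eqdef\int_{0}^{t}R(\tau)\,d\tau=\sum_{n\geq1}\frac{(-1)^{n+1}a_{n}}{2n+1}t^{2n+1}$, where $a_{n}\eqdef\frac{(2n+1)!}{(2^{n}n!)^{2}}$, the chain rule gives $\nabla_{x}\triangle_{y}f\,R(\triangle_{y}f)=\nabla_{x}[\mathcal{R}(\triangle_{y}f)]$, so that with $G(x)\eqdef\int_{\mathbb{R}^{2}}\frac{y}{|y|^{2}}\mathcal{R}(\triangle_{y}f(x))\,dy$ one has $N(f)=\frac{1}{2\pi}\nabla_{x}\cdot G$ and hence $\mathscr{F}(N(f))(\xi)=i\xi\cdot\widehat{G}(\xi)$. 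This exposes the crucial extra factor $|\xi|$, giving $\int|\xi|^{s}|\mathscr{F}(N(f))|\,d\xi\leq\int|\xi|^{s+1}|\widehat{G}(\xi)|\,d\xi=\|G\|_{s+1}$ with $s+1\in(-1,0)$; this gained derivative is what makes the weight integrable near $\xi=0$. I would then expand $\widehat{G}(\xi)=\sum_{n\geq1}\frac{(-1)^{n+1}a_{n}}{2n+1}\int \mathcal{M}_{n}(\eta_{0},\dots,\eta_{2n})\prod_{k=0}^{2n}\hat{f}(\eta_{k})\,d\eta_{1}\cdots d\eta_{2n}$, where $\eta_{0}=\xi-\sum_{k\geq1}\eta_{k}$ and $\mathcal{M}_{n}=\int_{\mathbb{R}^{2}}\frac{y}{|y|^{2n+3}}\prod_{k=0}^{2n}(1-e^{-2\pi i y\cdot\eta_{k}})\,dy$. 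The technical heart is the symbol bound $|\mathcal{M}_{n}|\lesssim C^{2n}\,\prod_{k}|\eta_{k}|/\max_{k}|\eta_{k}|$, which I would prove by splitting the $y$-integral at the scale $|y|\sim(\max_{k}|\eta_{k}|)^{-1}$, using $|1-e^{-2\pi iy\cdot\eta}|\leq\min(2,2\pi|y||\eta|)$ (the linear bound on all factors for small $y$, and the bounded bound on the largest-frequency factor for large $y$) together with the oddness of $y/|y|^{2}$. Since $\mathcal{M}_{n}$ is homogeneous of degree $2n$, this bound assigns a weight $|\eta_{k}|$ to every frequency except the largest.

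Finally I would split $\|G\|_{s+1}=\int_{|\xi|>1}+\int_{|\xi|\leq1}$. On $\{|\xi|>1\}$ I use $|\xi|^{s+1}\leq1$ and integrate the symbol estimate fully: distributing the $2n$ weights gives $\int_{|\xi|>1}|\xi|^{s+1}|\widehat{G}|\leq\|G\|_{0}\lesssim\sum_{n}a_{n}C^{2n}\|f\|_{0}\|f\|_{1}^{2n}$. On $\{|\xi|\leq1\}$ the weight $w(\xi)=|\xi|^{s+1}\mathbf{1}_{|\xi|\leq1}$ lies in $L^{2}$ precisely because $s+1>-1>-2$, so a generalized Young inequality (placing $w$ and the single weightless factor in $L^{2}$, and the $2n$ weighted factors in $L^{1}$) yields $\int_{|\xi|\leq1}|\xi|^{s+1}|\widehat{G}|\lesssim\sum_{n}a_{n}C^{2n}\|f\|_{L^{2}}\|f\|_{1}^{2n}$. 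In both sums I peel off one factor, $\|f\|_{1}^{2n}\leq k_{0}^{2n-1}\|f\|_{1}$ (using the uniform bound $\|f\|_{1}(t)\leq\|f_{0}\|_{1}<k_{0}$), absorb $\|f\|_{0}$ and $\|f\|_{L^{2}}\lesssim 1$ via Corollary \ref{initialboundcor} and \eqref{sobolevuniform}, and note that $\sum_{n\geq1}a_{n}C^{2n}k_{0}^{2n-1}<\infty$ because $\sum_{n\geq1}a_{n}t^{2n}=(1-t^{2})^{-3/2}-1$ converges for $Ck_{0}<1$, which holds for $k_{0}\approx1/5$. This produces $\int|\xi|^{s}|\mathscr{F}(N(f))|\,d\xi\lesssim\|f\|_{1}$ with constant depending on $s$, $k_{0}$, and $\|f_{0}\|_{H^{3}}$, as claimed.

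I expect the \emph{main obstacle} to be the negative weight near the origin: for $s<-1$ one cannot attach $|\xi|^{s}$ to the highest-frequency factor, because the frequencies can nearly cancel, making $|\xi|$ small while the individual $|\eta_{k}|$ remain large. The divergence structure, which trades $|\xi|^{s}$ for the integrable $|\xi|^{s+1}$ and confines the true singularity to the ball $\{|\xi|\leq1\}$, is exactly what rescues the estimate; this is also the precise point at which the hypothesis $s>-2$ is used, through $w\in L^{2}$ and $\int_{|\xi|\leq1}|\xi|^{s+1}\,d\xi<\infty$.
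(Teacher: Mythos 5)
Your reduction to bounding $\int|\xi|^{s}|\mathscr{F}(N(f))|\,d\xi$ by $\|f\|_{1}$ matches the paper, but from there your route is genuinely different: the paper never uses the divergence form of $N(f)$. It instead starts from the multilinear bound \eqref{firstineq} taken from \cite{CCGRPS}, in which every one of the $2n+1$ factors carries a full derivative (there is no gain of $1/\max_{k}|\eta_{k}|$), rewrites the right-hand side as $\int|\xi_{2n}||\hat f(\xi_{2n})|\bigl(|\cdot|^{s}\ast(\ast^{2n}|\cdot||\tilde{\hat f}|)\bigr)(\xi_{2n})\,d\xi_{2n}$, and disposes of the weight $|\cdot|^{s}$, $-2<s<-1$, by H\"older \eqref{holder}, Young \eqref{young}, the Hardy--Littlewood--Sobolev inequality \eqref{hls} (this is exactly where the restriction $-2<s<-1$ enters, via the choice $1/q=(-s-1)/2$), interpolation \eqref{interpolation}, and repeated Young. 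Your divergence-form idea, which trades $|\xi|^{s}$ for $|\xi|^{s+1}$ and then needs only Young/Cauchy--Schwarz, is a legitimate alternative architecture, and your endgame (the split at $|\xi|=1$, the weight $w=|\xi|^{s+1}\mathbf{1}_{|\xi|\le 1}\in L^{2}$ precisely when $s>-2$, peeling one power of $\|f\|_{1}$, absorbing $\|f\|_{L^{2}}$ and $\|f\|_{0}$ by the uniform bounds) is sound.

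The gap is in the step you yourself call the technical heart. The symbol bound $|\mathcal{M}_{n}|\lesssim C^{2n}\prod_{k}|\eta_{k}|/\max_{k}|\eta_{k}|$ does not follow from the splitting you describe: on $\{|y|>1/M\}$, $M=\max_{k}|\eta_{k}|$, bounding only the largest-frequency factor by $2$ and the remaining $2n$ factors linearly leaves an integrand of size $\prod_{k\neq k^{*}}|\eta_{k}|\,|y|^{-2}$, and $\int_{|y|>1/M}|y|^{-2}\,dy$ diverges logarithmically in $\mathbb{R}^{2}$. Concretely, for $n=1$, $\eta_{0}=Ne_{1}$, $\eta_{1}=\epsilon e_{2}$, $\eta_{2}=\epsilon' e_{1}$ with $\epsilon',\epsilon\ll N$, pointwise min-bounds give $\epsilon\epsilon'\log(N/\epsilon)$ from the region $1/N<|y|<1/\epsilon$ against your claimed bound $\epsilon\epsilon'$; oddness of $y/|y|^{2}$ by itself cannot remove the logarithm --- one needs either a non-stationary-phase argument in the high-frequency factor or the cancellation $\sin a+\sin b-\sin(a+b)=\sin a(1-\cos b)+\sin b(1-\cos a)$, so the claimed bound may well be true but is not proved by your sketch. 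Two repairs. First, split instead at $|y|\sim 1/M_{2}$, where $M_{2}$ is the second-largest frequency (all factors bounded linearly below, the two largest factors bounded by $2$ above); this yields, with constants uniform in $n$, the weaker bound $|\mathcal{M}_{n}|\lesssim\prod_{k}|\eta_{k}|/M_{2}$, which still suffices for your argument: each term again has exactly one weightless factor (now the second-largest rather than the largest), to be integrated without weight (giving $\|f\|_{0}$) on $\{|\xi|>1\}$ or placed in $L^{2}$ (giving $\|f\|_{L^{2}}$) on $\{|\xi|\le 1\}$. Second, your constants need care: the claim that $\sum_{n}a_{n}C^{2n}k_{0}^{2n-1}<\infty$ ``holds for $k_{0}\approx 1/5$'' is unjustified if $C>5$; the paper's estimates are organized precisely so that the per-factor constant is $1$ and the series $\sum_{n}a_{n+1}k_{0}^{2n}$ converges for every $k_{0}<1$. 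With the uniform-in-$n$ constants produced by the $M_{2}$-splitting this second difficulty also disappears.
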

	
\begin{proof}
Following the computation of the time derivative of $\|f\|_{1}(t)$ in the proof of Theorem 3.1 in \cite{JEMS}, we can prove that
\begin{equation}
	\label{energyINEQ}
\frac{d}{dt} \|f\|_{s} (t) +C\int_{\mathbb{R}^{2}} d\xi \ |\xi|^{s+1}|\hat{f}(\xi)|  \leq   \int_{\mathbb{R}^{2}} d\xi \ |\xi|^{s}|\mathscr{F}(N(f))(\xi)|.
\end{equation}
We can bound $|\mathscr{F}(N(f))(\xi)|$ as in \cite{CCGRPS} to get the bound: 
\begin{multline}\label{firstineq}
\int_{\mathbb{R}^{2}} d\xi \ |\xi|^{s}|\mathscr{F}(N(f))(\xi)| 
\\
\leq \pi \sum_{n\geq 1} a_{n} \int_{\mathbb{R}^{2}} \int_{\mathbb{R}^{2}} \cdots \int_{\mathbb{R}^{2}} |\xi|^{s} |\xi - \xi_{1}| |\hat{f}(\xi-\xi_{1})| 
\\
\times
\prod_{j=1}^{2n-1}|\xi_{j} - \xi_{j+1}| |\hat{f}(\xi_{j} - \xi_{j+1})| |\xi_{2n}||\hat{f}(\xi_{2n})| d\xi d\xi_{1} \ldots d\xi_{2n}
\end{multline}
where $$a_{n} = \frac{(2n+1)!}{(2^{n}n!)^{2}}.$$ 
Given a function $g$, define the corresponding function $\tilde{g}$ by $\tilde{g}(x) = g(-x)$. Then, since $|x-y| = |y-x|$ for any $x,y \in\mathbb{R}^{2}$, we obtain:
\begin{multline}
\int_{\mathbb{R}^{2}} d\xi \ |\xi|^{s}|\mathscr{F}(N(f))(\xi)| 
\\
\leq \pi \sum_{n\geq 1} a_{n} \int_{\mathbb{R}^{2}} \int_{\mathbb{R}^{2}} \cdots \int_{\mathbb{R}^{2}} |\xi|^{s} |\xi_{1} - \xi| |\tilde{\hat{f}}(\xi_{1}-\xi)| 
\\
\times
\prod_{j=1}^{2n-1}|\xi_{j+1} - \xi_{j}| |\tilde{\hat{f}}(\xi_{j+1} - \xi_{j})| |\xi_{2n}||\hat{f}(\xi_{2n})| d\xi d\xi_{1} \ldots d\xi_{2n}.
\end{multline}
Hence, writing the right hand side in terms of convolutions, we obtain that		
\begin{multline*}
	\int_{\mathbb{R}^{2}} d\xi \ |\xi|^{s}|\mathscr{F}(N(f))(\xi)| 
	\\
	\leq \pi \sum_{n\geq 1} a_{n}  \int_{\mathbb{R}^{2}}  |\xi_{2n}||\hat{f}(\xi_{2n})| \Big(|\cdot|^{s} \ast \Big(\ast^{2n}|\cdot||\tilde{\hat{f}}(\cdot)| \Big) \Big)(\xi_{2n}) d\xi_{2n} 
\end{multline*}
Applying Holder's inequality:
\begin{multline}
	\label{holder}
\int_{\mathbb{R}^{2}}  |\xi_{2n}||\hat{f}(\xi_{2n})| \Big(|\cdot|^{s} \ast (\ast^{2n} |\cdot||\tilde{\hat{f}}(\cdot)|)  \Big)(\xi_{2n}) d\xi_{2n} 
\\
\leq \||\cdot||\hat{f}(\cdot)|\|_{L^{1}} \||\cdot|^{s} \ast ( \ast^{2n} |\cdot||\tilde{\hat{f}}(\cdot)|)  \|_{L^{\infty}}
\end{multline}
The first term on the right hand side of (\ref{holder}) is exactly $\|f\|_{1}$ which is bounded. The second term can be controlled first by Young's inequality with $\frac{1}{p}+\frac{1}{q} = 1$:
\bea\label{young}
\||\cdot|^{s} \ast ( \ast^{2n} |\cdot||\tilde{\hat{f}}(\cdot)|)  \|_{L^{\infty}}
\leq \||\cdot|^{s} \ast ( \ast^{2n-1} |\cdot||\tilde{\hat{f}}(\cdot)|)  \|_{L^{q}}\||\cdot||\tilde{\hat{f}}(\cdot)| \|_{L^{p}}
\eea
where we choose $q \in (2,\infty)$ such that $\frac{1}{q} = \frac{-s-1}{2}$. Thus, $p\in (1,2)$, so we use interpolation to obtain  for $\frac{\theta}{1} + \frac{1-\theta}{2} = \frac{1}{p}$ that
\bea\label{interpolation}
\||\cdot||\tilde{\hat{f}}(\cdot)| \|_{L^{p}} = \||\cdot||\hat{f}(\cdot)| \|_{L^{p}} \leq \||\cdot||\hat{f}(\cdot)| \|_{L^{1}}^{\theta}\||\cdot||\hat{f}(\cdot)| \|_{L^{2}}^{1-\theta} \leq \|f\|_{1}^{\theta}\|f\|_{H^{3}}^{1-\theta}.
\eea
We control the other term by the Hardy-Littlewood-Sobolev inequality since $q\in (2,\infty)$:	
\bea\label{hls}
\||\cdot|^{s} \ast ( \ast^{2n-1} |\cdot||\tilde{\hat{f}}(\cdot)|)  \|_{L^{q}} \lesssim \| \ast^{2n-1} |\cdot||\tilde{\hat{f}}(\cdot)| \|_{L^{2}}
\eea
since $-2< s < -1$ and our choice of $q$ enables the equality $1+ \frac{1}{q} = -\frac{s}{2} + \frac{1}{2}$. Finally we use Young's inequality with $1+\frac{1}{2} = 1 + \frac{1}{2}$ repeatedly to control the $2n-1$ convolutions and get the bound
\bea\label{repeatedyoung}
\|\ast^{2n-1} |\cdot||\tilde{\hat{f}}(\cdot)|  \|_{L^{2}} \leq \||\cdot||\tilde{\hat{f}}(\cdot)| \|_{L^{2}}\||\cdot||\tilde{\hat{f}}(\cdot)| \|_{L^{1}}^{2n-2} \leq \|f \|_{H^{3}}\|f \|_{1}^{2n-2},
\eea
where we have used the inequality:
$$ \||\xi||\hat{f}|\|_{L^{2}_{\xi}} \leq \| (1+|\xi|^{2})^{\frac{3}{2}}|\hat{f}|\|_{L^{2}_{\xi}} = \|f\|_{H^{3}}$$
Combining the above estimates from (\ref{holder}), (\ref{young}), (\ref{interpolation}), (\ref{hls}) and (\ref{repeatedyoung}), we obtain the following bound
$$
	\notag
\int_{\mathbb{R}^{2}}  |\xi_{2n}||\hat{f}(\xi_{2n})| \Big(|\cdot|^{s} \ast (\ast^{2n} |\cdot||\tilde{\hat{f}}(\cdot)|)  \Big)(\xi_{2n}) d\xi_{2n} 
\lesssim
\|f\|_{H^{3}}^{2-\theta}\|f\|_{1}^{2n-1+\theta}.
$$
Summing over all $n$, we get from (\ref{firstineq}) that
\begin{multline}
\label{finalineq}
\int_{\mathbb{R}^{2}} d\xi \ |\xi|^{s}|\mathscr{F}(N(f))(\xi)| 
\\
\lesssim  
\|f\|_{H^{3}}^{2-\theta}\|f\|_{1}^{\theta}\sum_{n\geq 1} a_{n}\|f\|_{1}^{2n-1} 
\lesssim
 \|f\|_{H^{3}}^{2-\theta}\|f\|_{1}^{\theta}\sum_{n\geq 0} a_{n+1}\|f\|_{1}^{2n+1}.
\end{multline}
By Theorem 3.1 in \cite{CCGRPS}, $\|f\|_{1} \leq \|f_{0}\|_{1} < k_{0}$. Further, given this bound on $\|f\|_{1}$, the above series converges.  Then by \eqref{sobolevuniform} we also know that 
$
\|f\|_{H^{3}(\mathbb{R}^2)}(t) \lesssim 1
$
uniformly in time.  Hence the following uniform bounds hold independently of $n$
\begin{multline} \notag
	 \int_{\mathbb{R}^{2}} d\xi \ |\xi|^{s}|\mathscr{F}(N(f))(\xi)| 
\lesssim
\sum_{n\geq 0} a_{n+1}\|f\|_{1}^{2n+1} 
\lesssim
\|f\|_{1}\sum_{n\geq 0} a_{n+1}\|f\|_{1}^{2n} 
\\
\lesssim
\|f\|_{1}\sum_{n\geq 0} a_{n+1}k_{0}^{2n} 
\lesssim
\|f\|_{1}.
\end{multline}
Here the uniform constant depends on $s$ and $k_{0}$. 
\end{proof}

Combining Proposition \ref{prop9} and \eqref{initialdecay} we obtain for example
$$ 
\frac{d}{dt} \|f \|_{s} (t) \lesssim (1+t)^{-1-\epsilon},
$$
for a small $\epsilon >0$. 
Then we integrate this to obtain
$$ 
\|f\|_{s}(t) \lesssim \|f_{0}\|_{s} + 1 + (1+t)^{-\epsilon}
$$ 
We conclude that $\|f\|_{s} \lesssim 1$ uniformly in time for $-2<s<-1$.  

In order to obtain the uniform bound for $s=-1$ we observe that
\begin{align}\label{interpbd}
\|f\|_{-1} &= \int_{\mathbb{R}^{2}} |\xi|^{-1} |\hat{f}(\xi)| \ d\xi 
\\
&= \int_{|\xi|\leq 1} |\xi|^{-1} |\hat{f}(\xi)| \ d\xi + \int_{|\xi|> 1} |\xi|^{-1} |\hat{f}(\xi)| \ d\xi 
\notag
\\
&\leq \int_{|\xi|\leq 1} |\xi|^{-2+\gamma} |\hat{f}(\xi)| \ d\xi + \int_{|\xi|> 1} |\xi| |\hat{f}(\xi)| \ d\xi 
\notag
\\
&\leq \|f\|_{-2+\gamma} + \|f\|_{1} \lesssim 1,
\notag
\end{align}
where $0<\gamma<1$. Hence, we have uniform in time bounds for $\|f\|_{s}$ for any $-2<s\leq -1$. We can now use Lemma \ref{decaylemma} to conclude the time decay
\bea\label{decayepsilon}
\|f\|_{\ss}(t) \lesssim (1+t)^{-\ss+\rr},
\eea
which holds for any $\ss \in [0, 1]$ and any $\rr \in (-2, \ss)$ where the implicit constant in particular depends on $\|f\|_{\rr}$ and $k_{0}$.
 We summarize this in the following proposition.

\begin{prop}\label{prop10}
Suppose $f$ is the solution to the Muskat problem in 3D described in Theorem \ref{oldthm3D}. Then we have uniformly for $-2<s\leq -1$ the following estimate
$$
\|f\|_{s}(t) \lesssim 1,
$$  
where the implicit constant depends on $k_{0}$ and $\|f_0\|_{s}<\infty$.  And the decay estimate (\ref{decayepsilon}) holds.
\end{prop}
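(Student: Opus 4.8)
The plan is to read Proposition \ref{prop10} as a consolidation of the differential inequality of Proposition \ref{prop9}, the initial decay \eqref{initialdecay}, and the decay Lemma \ref{decaylemma}; since these ingredients are already in hand, the work is mostly in tracking the admissible ranges of the indices. For the open range $-2<s<-1$ I would start from Proposition \ref{prop9}, which gives $\frac{d}{dt}\|f\|_s(t)\lesssim \|f\|_1(t)$. The crucial point is to feed in a \emph{strictly} integrable decay rate for $\|f\|_1$: applying \eqref{initialdecay} with $\ss=1$ and choosing the auxiliary index $\rr\in(-1,0)$ (so that $\|f_0\|_\rr\lesssim\|f_0\|_{H^3}<\infty$ by the embedding \eqref{hembed}, valid since $r=3>\rr+1$) yields $\|f\|_1(t)\lesssim (1+t)^{-1-\epsilon}$ with $\epsilon=-\rr>0$. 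Integrating in time then gives
\[
\|f\|_s(t)\le \|f_0\|_s + C\int_0^t (1+\tau)^{-1-\epsilon}\,d\tau \lesssim \|f_0\|_s + 1,
\]
uniformly in $t$, which uses the hypothesis $\|f_0\|_s<\infty$.

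For the endpoint $s=-1$ the differential inequality of Proposition \ref{prop9} degenerates (the Young exponent $q$ there tends to $\infty$), so instead I would interpolate by splitting $\|f\|_{-1}$ into low and high frequencies exactly as in \eqref{interpbd}: on $|\xi|\le 1$ bound $|\xi|^{-1}\le|\xi|^{-2+\gamma}$ for a fixed $\gamma\in(0,1)$ so that the contribution is controlled by $\|f\|_{-2+\gamma}$, and on $|\xi|>1$ bound $|\xi|^{-1}\le|\xi|$ so that the contribution is controlled by $\|f\|_1$. The first term is uniformly bounded by the $-2<s<-1$ case just established (since $-2+\gamma$ lies in that range) and the second by $\|f\|_1\le\|f_0\|_1<k_0$. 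This closes the uniform bound for all $-2<s\le -1$.

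Finally, for the decay estimate \eqref{decayepsilon} I would invoke Lemma \ref{decaylemma} with the differential inequality \eqref{1Sdiffbound} for $\ss\in[0,1]$. The gain over \eqref{initialdecay} is that the auxiliary index $\rr$ may now be taken anywhere in $(-2,\ss)$: for any such $\rr$ the norm $\|f\|_\rr$ is uniformly bounded — by Corollary \ref{initialboundcor} when $\rr\in(-1,\ss)$ and by the bound just proved when $\rr\in(-2,-1]$ — hence $\|f\|_{\rr,\infty}\le\|f\|_\rr\lesssim 1$ by \eqref{ineqbd} (cf. Remark \ref{remarkdecay}), which is precisely the hypothesis the decay lemma requires. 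The lemma then delivers $\|f\|_\ss(t)\lesssim(1+t)^{-\ss+\rr}$, and this is \eqref{decayepsilon}.

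The main obstacle is the first step: one must verify that the decay of $\|f\|_1$ is strictly faster than $(1+t)^{-1}$ so that the time integral of the right-hand side of Proposition \ref{prop9} converges. This hinges on being able to take the auxiliary index $\rr$ \emph{negative} in \eqref{initialdecay}, which in turn requires the a priori finiteness $\|f_0\|_\rr<\infty$ for some $\rr\in(-1,0)$ — a fact supplied for free by the Sobolev embedding \eqref{hembed} since $f_0\in H^3$. Once this integrability is secured, the remaining steps are routine bookkeeping on the index ranges.
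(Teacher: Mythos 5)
Your proposal is correct and follows essentially the same route as the paper: Proposition \ref{prop9} combined with the $(1+t)^{-1-\epsilon}$ decay of $\|f\|_{1}$ from \eqref{initialdecay} (with negative auxiliary index, finite by the $H^{3}$ embedding), integrated in time for $-2<s<-1$; the frequency-splitting bound \eqref{interpbd} for the endpoint $s=-1$; and then Lemma \ref{decaylemma} with \eqref{1Sdiffbound} and \eqref{ineqbd} to get \eqref{decayepsilon}. Your explicit justification of why $\|f_{0}\|_{\rr}<\infty$ for $\rr\in(-1,0)$ and why Proposition \ref{prop9} fails at $s=-1$ are details the paper leaves implicit, but the argument is the same.
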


This establishes uniform bounds for a larger range of $s$. We will now prove bounds on the endpoint case.

\subsection{The Endpoint Case $s=-2$} 
To prove the uniform bounds for the endpoint case $s=-2$, we use the Besov-type norm from \eqref{normSinfty} which we recall as
\bea \notag
\|f\|_{-2,\infty} = \Big\|\int_{C_{j}}|\xi|^{-2}|\hat{f}(\xi)|d\xi \Big\|_{l^{\infty}_{j}},
\eea
where we further recall the annulus $C_{j} = \{2^{j-1} \leq |\xi| < 2^{j}\}$. 

\begin{prop}\label{endpointprop}
 Let $f$ be the unique solution to the Muskat problem in 3D from Theorem \ref{oldthm3D}. Then the following estimate holds uniformly in time
\begin{equation}
\label{boundendpoint}
\|f\|_{-2,\infty}(t)\lesssim 1,
\end{equation}
where the implicit constant depends on  $\|f_{0}\|_{-2,\infty} < \infty$ and $k_{0}$.
\end{prop}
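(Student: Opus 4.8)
The plan is to run the same Fourier-side energy estimate that was used in Proposition \ref{prop9}, but localized to a single annulus $C_j$, and then to take the supremum over $j \in \mathbb{Z}$. The quantity to control is $I_j(t) \eqdef \int_{C_j} |\xi|^{-2} |\hat{f}(\xi)| \, d\xi$, and the goal is a bound on $\sup_j I_j(t)$ that is uniform in time. Starting from the interface equation \eqref{interface3D}, the differential inequality on each annulus takes the form
\begin{equation} \notag
\frac{d}{dt} \int_{C_j} |\xi|^{-2} |\hat{f}(\xi)| \, d\xi + C \int_{C_j} |\xi|^{-1} |\hat{f}(\xi)| \, d\xi \leq \int_{C_j} |\xi|^{-2} |\mathscr{F}(N(f))(\xi)| \, d\xi,
\end{equation}
where the negative dissipative term comes from the $-\Lambda f$ part of \eqref{interface3D} and is, up to the factor $2^{-j}$ that is roughly constant on $C_j$, comparable to $2^j I_j$. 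The key point is that on each fixed annulus the weight $|\xi|^{-2}$ behaves like the constant $2^{-2j}$, so the dissipation provides a genuine coercive term of size $\sim 2^j I_j$.

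Next I would estimate the nonlinear contribution $\int_{C_j} |\xi|^{-2} |\mathscr{F}(N(f))(\xi)| \, d\xi$ exactly as in the proof of Proposition \ref{prop9}: expand $\mathscr{F}(N(f))$ via the series in \eqref{firstineq} with coefficients $a_n = \frac{(2n+1)!}{(2^n n!)^2}$, symmetrize using the $\tilde{g}$ trick, and write the $\xi$-integral as an iterated convolution. The difference here is that the outer integral is over $C_j$ rather than all of $\mathbb{R}^2$, so after applying Hölder's inequality on $C_j$ I expect to peel off one factor of $\|f\|_1 = \||\cdot||\hat{f}|\|_{L^1}$ (which is uniformly bounded by $k_0$) and be left with an $L^\infty$ bound on the convolution $|\cdot|^{-2} \ast (\ast^{2n}|\cdot||\tilde{\hat{f}}|)$ restricted to $C_j$. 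Since on $C_j$ the weight $|\xi|^{-2} \approx 2^{-2j}$, one factor of $2^{-2j}$ can be extracted and the remaining convolution estimated via Young/Hardy--Littlewood--Sobolev and interpolation as in \eqref{young}--\eqref{repeatedyoung}, using the uniform bounds \eqref{sobolevuniform} on $\|f\|_{H^3}$ and on $\|f\|_1$. Summing the convergent series (convergence is guaranteed by $\|f\|_1 < k_0$ and condition \eqref{k0}) should produce a bound of the schematic form
\begin{equation} \notag
\int_{C_j} |\xi|^{-2} |\mathscr{F}(N(f))(\xi)| \, d\xi \lesssim \|f\|_1 \, \sup_{k} \Big( 2^{-(k-j)} I_k \Big) \lesssim \|f\|_1 \, \|f\|_{-2,\infty},
\end{equation}
where the geometric factor $2^{-(k-j)}$ from the convolution structure makes the sum over annuli summable and collapses the right side into the $\|\cdot\|_{-2,\infty}$ norm times the small factor $\|f\|_1$.

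The main obstacle, and the step that needs the most care, is controlling the nonlinear term on $C_j$ \emph{by the same Besov-type norm} $\|f\|_{-2,\infty}$ rather than by a full $\|f\|_{-2}$ norm, since the latter is exactly the endpoint quantity we cannot control. The delicate point is that the convolution $|\cdot|^{-2} \ast (\cdots)$ evaluated on $C_j$ picks up contributions from all frequency scales, so I must track how the annulus on which the singular factor $|\xi|^{-2}$ lives interacts with the frequency supports of the convolved factors. I expect to need a Littlewood--Paley decomposition of the convolution together with the geometric decay in $|k-j|$ coming from the homogeneous weights, so that the worst annulus dominates and the supremum over $j$ closes. Once the estimate
\begin{equation} \notag
\frac{d}{dt} I_j(t) + c \, 2^j I_j(t) \lesssim \|f\|_1(t) \, \|f\|_{-2,\infty}(t)
\end{equation}
is in hand, I would take the supremum over $j$ and invoke the uniform-in-time decay of $\|f\|_1(t)$ (which is $\lesssim (1+t)^{-1+\nu}$ from the previous propositions, hence integrable) to absorb the nonlinear term and conclude, via a Grönwall-type argument using the dissipation, that $\|f\|_{-2,\infty}(t) \lesssim \|f_0\|_{-2,\infty} \lesssim 1$ uniformly in time.
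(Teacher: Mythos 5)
Your starting point --- the annulus-localized differential inequality \eqref{energyCj} --- matches the paper, and several of your ingredients (the bound $|\xi|^{-2}\approx 2^{-2j}$ on $C_j$, Young's inequality, the uniform bounds on $\|f\|_{H^{3}}$ and $\|f\|_{1}$, and the time-integrability of $\|f\|_{1}(t)$) are the right ones. But the core of your argument has a genuine gap, in two respects. First, you propose to follow Proposition \ref{prop9} literally: symmetrize, apply H\"older to peel off $\|f\|_{1}$, and then control $\bigl\||\cdot|^{-2}\ast\bigl(\ast^{2n}|\cdot||\tilde{\hat{f}}|\bigr)\bigr\|_{L^{\infty}}$ by Hardy--Littlewood--Sobolev. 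This fails exactly at the endpoint $s=-2$: in $\mathbb{R}^{2}$ the kernel $|\xi|^{-2}$ is not locally integrable, and HLS requires the kernel exponent to lie strictly in $(0,d)$, so the convolution $|\cdot|^{-2}\ast(\cdots)$ is not a usable object --- this is precisely why the endpoint needs a separate proposition in the first place. Second, the estimate you yourself flag as ``the main obstacle,'' namely a bound of the nonlinear term by $\|f\|_{1}\,\|f\|_{-2,\infty}$ with geometric decay across annuli, is both unproven in your sketch and incorrectly set up: in your schematic $\sup_{k}\bigl(2^{-(k-j)}I_{k}\bigr)$ the factor $2^{-(k-j)}=2^{j-k}$ is large when $k<j$, so that supremum is \emph{not} controlled by $\|f\|_{-2,\infty}=\sup_{k}I_{k}$. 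The step on which your Gr\"onwall/absorption closure rests is therefore missing.

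The insight you are missing is that no Besov factor is needed on the right-hand side at all. The paper bounds $|\xi|^{-2}\le 2^{-2j+2}$ pointwise on $C_{j}$ \emph{before} any convolution manipulation, so the nonlinear term becomes $2^{-2j+2}\pi\sum_{n\ge 1}a_{n}\int_{C_{j}}\bigl(\ast^{2n+1}|\cdot||\hat{f}|\bigr)(\xi)\,d\xi$. Then $\int_{C_{j}}$ is bounded by $|C_{j}|$ times an $L^{\infty}$ norm, and $|C_{j}|\lesssim 2^{2j}$ cancels the $2^{-2j}$ exactly; the $L^{\infty}$ norm of the $(2n+1)$-fold convolution is handled by Young's inequality alone as $\||\cdot||\hat{f}|\|_{L^{2}}^{2}\,\||\cdot||\hat{f}|\|_{L^{1}}^{2n-1}\le\|f\|_{H^{3}}^{2}\|f\|_{1}^{2n-1}$ --- no HLS, no interpolation, no Littlewood--Paley analysis of frequency interactions. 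Summing the series (using $\|f\|_{1}\le k_{0}$) yields a bound $\lesssim\|f\|_{1}$ uniform in $j$, and since $\|f\|_{1}(t)\lesssim(1+t)^{-3/2}$ by \eqref{initialdecay} is integrable in time, one simply integrates \eqref{energyCj} in $t$ (dropping the nonnegative dissipation) to obtain $I_{j}(t)\lesssim\|f_{0}\|_{-2,\infty}+1$ uniformly in $j$ and $t$. No absorption argument or Gr\"onwall closure in the $\|\cdot\|_{-2,\infty}$ norm is required; had your key estimate been available it would have sufficed, but the simpler bound makes it unnecessary and avoids the frequency-interaction analysis that your sketch cannot complete.
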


\begin{proof}
To control this endpoint norm, we uniformly bound the integral over $C_{j}$ for each $j\in\mathbb{Z}$. Analogous to the proof of \eqref{energyINEQ} from \cite[Theorem 3.1]{JEMS}, we can use the same exact argument to show that
\begin{multline}\label{energyCj}
	\frac{d}{dt} \int_{C_{j}}|\xi|^{-2}|\hat{f}(\xi)|d\xi + C\int_{C_{j}} d\xi \ |\xi|^{-1}|\hat{f}(\xi)| 
\leq  \int_{C_{j}} d\xi \ |\xi|^{-2}|\mathscr{F}(N(f))(\xi)|
\end{multline}
Note that on the annulus $C_{j}$ the term $|\xi|^{-2}$ is bounded above and below.  
Next, we control the term on the right hand side using analogous estimates on the integrand as we did for \eqref{energyINEQ}, the difference is that now we control $|\xi|^{-2}$ by the inner radius of the annulus:
\begin{multline*}
\int_{C_{j}} d\xi \ |\xi|^{-2}|\mathscr{F}(N(f))(\xi)| 
\\ 
\leq 2^{-2j+2}\pi \sum_{n\geq 1} a_{n} 
\int_{C_{j}} d\xi \int_{\mathbb{R}^{2}} d\xi_{1} \cdots \int_{\mathbb{R}^{2}} d\xi_{2n} ~ |\xi - \xi_{1}| |\hat{f}(\xi-\xi_{1})| 
\\
\times
\prod_{j=1}^{2n-1}|\xi_{j} - \xi_{j+1}| |\hat{f}(\xi_{j} - \xi_{j+1})| |\xi_{2n}||\hat{f}(\xi_{2n})|.  
\end{multline*}
Writing this integral in terms of convolutions, we obtain:
\begin{equation}\notag
	\int_{C_{j}} d\xi \ |\xi|^{-2}|\mathscr{F}(N(f))(\xi)|  \leq 2^{-2j+2}\pi \sum_{n\geq 1} a_{n} \int_{C_{j}}\Big(\ast^{2n+1} |\cdot||\hat{f}(\cdot)|\Big)(\xi) \ d\xi.
\end{equation}
Next, we obtain:
\bea \notag
\int_{C_{j}} d\xi \ |\xi|^{-2}|\mathscr{F}(N(f))(\xi)|  \leq 4\pi \sum_{n\geq 1} a_{n} \|\ast^{2n+1} |\cdot||\hat{f}(\cdot)|\|_{L^{\infty}},
\eea
since the size of the annulus $C_{j}$ can be bounded above by $2^{2j}$. Next by using Young's inequality, first with $1 + \frac{1}{\infty} = \frac{1}{2} + \frac{1}{2}$ and then with $1+\frac{1}{2}=1+\frac{1}{2}$, we obtain:
\begin{align*}
\int_{C_{j}} d\xi \ |\xi|^{-2}|\mathscr{F}(N(f))(\xi)|  &\leq 4\pi \sum_{n\geq 1} a_{n} \||\cdot||\hat{f}(\cdot)|\|^{2}_{L^{2}}\||\cdot||\hat{f}(\cdot)|\|_{L^{1}}^{2n-1}\\
&\leq  4\pi \|f\|^{2}_{H^{3}} \sum_{n\geq 1} a_{n}\|f\|_{1}^{2n-1} \\ &\leq  4\pi \|f\|^{2}_{H^{3}} \sum_{n\geq 0} a_{n+1}\|f\|_{1}^{2n+1}
\end{align*}
Since $\|f\|_{1} \leq k_{0}$, we obtain that:

$$\int_{C_{j}} d\xi \ |\xi|^{-2}|\mathscr{F}(N(f))(\xi)| \leq 4\pi \|f\|^{2}_{H^{3}}\|f\|_{1} \sum_{n\geq 0} a_{n+1}k_{0}^{2n}.$$
By the uniform bound on $\|f\|_{H^{3}}$ and since the series $\sum_{n\geq 0} a_{n+1}k_{0}^{2n}$ converges, we conclude that we have the following uniform in $j$ estimate
$$
\int_{C_{j}} d\xi \ |\xi|^{-2}|\mathscr{F}(N(f))(\xi)| \lesssim \|f\|_{1}.
$$ 
Finally, since for example $\|f\|_{1} \lesssim (1+t)^{-\frac{3}{2}}$ by \eqref{initialdecay}, we see that 
$$
\int_{C_{j}} d\xi \ |\xi|^{-2}|\hat{f}(\xi)| \lesssim  (1+t)^{-\frac{3}{2}}
$$ 
for a uniform constant which is independent of $j$. We then integrate \eqref{energyCj} in time to conclude that we have the uniform in time bound \eqref{boundendpoint}.
\end{proof}

The uniform bound on this endpoint case allows us to prove stronger decay of $\|f\|_{s}(t)$ for $0\leq s \leq 1$ by using Lemma \ref{decaylemma}. We will also now prove decay for the case $1 <  s <  l-1$ using the decay of the norm $\|f\|_{1}(t)$.

\subsection{General Decay Estimates}

Finally, we obtain the main time decay estimates for the Muskat equation in 3D. Using (\ref{1diffbound}) and (\ref{boundendpoint}), we can apply Lemma \ref{decaylemma} to obtain

\begin{cor}
For the solution $f$ to the Muskat problem in 3D described in Theorem 1, we have the following uniform in time decay estimate:
\bea\label{thmproved}
\|f\|_{s}(t)\lesssim (1+t)^{-s+\rr},
\eea
where we allow $s$ to satisfy $0 \le s \le 1$ and we allow $\rr$ to satisfy $-2 \le \rr < s$.  

When $\rr > -2$ then we require 
additionally that $\|f_{0}\|_{\rr} < \infty$, and when $\rr = -2$ then we alternatively require $\|f_{0}\|_{-2,\infty} < \infty$.  The implicit constant in \eqref{thmproved} depends on  either  $\|f_{0}\|_{\rr}$ (when $\rr > -2$) or $\|f_{0}\|_{-2,\infty}$ (when $\rr = -2$), $\|f_0\|_{s}$ and $k_{0}$.
\end{cor}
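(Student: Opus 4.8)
The plan is to obtain \eqref{thmproved} as a direct application of the general Decay Lemma \ref{decaylemma}, taking the lemma's index $\ss$ equal to the given $s$ (with $0 \le s \le 1$) and its lower index equal to the given $\rr$ with $-\Ddim \le \rr < s$. To invoke the lemma I must verify its two hypotheses: the differential inequality $\frac{d}{dt}\|f\|_{s}(t) \le -C\|f\|_{s+1}(t)$ for some $C>0$, and a uniform-in-time bound $\|f\|_{\rr,\infty}(t) \le C_{0}$. Since the conclusion of Lemma \ref{decaylemma} is precisely $\|f\|_{s}(t) \lesssim (1+t)^{-s+\rr}$, once these two inputs are in place the corollary follows at once.

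First I would record the differential inequality. For the full range $0 \le s \le 1$ this is exactly \eqref{1Sdiffbound}, which (as already noted above) follows from the same computation used to establish \eqref{1diffbound} in \cite{JEMS,CCGRPS}; the required finiteness of the initial datum, $\|f_{0}\|_{s} < \infty$, is guaranteed by the embedding \eqref{hembed} together with $f_{0} \in H^{l}$, $l \ge 3$. This supplies the first hypothesis of Lemma \ref{decaylemma}, with the constant $C$ depending only on $k_{0}$.

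Next I would establish the uniform control $\|f\|_{\rr,\infty}(t) \le C_{0}$, splitting into two cases. When $\rr = -\Ddim = -2$, this is precisely the endpoint bound \eqref{boundendpoint} of Proposition \ref{endpointprop}, valid under the stated hypothesis $\|f_{0}\|_{-2,\infty} < \infty$. When $-2 < \rr < s$, I would instead use the elementary inequality \eqref{ineqbd}, namely $\|f\|_{\rr,\infty}(t) \le \|f\|_{\rr}(t)$, and then invoke the uniform $s$-norm bounds already proved: Corollary \ref{initialboundcor} for $-1 < \rr < 2$ and Proposition \ref{prop10} for $-2 < \rr \le -1$. The latter is exactly where the extra hypothesis $\|f_{0}\|_{\rr} < \infty$ enters. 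Either way one obtains a uniform constant $C_{0}$ depending on $k_{0}$ and on the relevant initial norm.

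With both hypotheses verified, Lemma \ref{decaylemma} applied with $\ss = s$ yields \eqref{thmproved}, and tracking the constants through the lemma produces the stated dependence on either $\|f_{0}\|_{\rr}$ (when $\rr > -2$) or $\|f_{0}\|_{-2,\infty}$ (when $\rr = -2$), together with $\|f_{0}\|_{s}$ and $k_{0}$. I expect the only genuinely delicate point to be the endpoint $\rr = -2$: there the crude bound $\|f\|_{-2,\infty} \le \|f\|_{-2}$ is useless, since $\|f\|_{-2}(t)$ need not even be finite, so one must rely on the annulus-by-annulus argument of Proposition \ref{endpointprop} rather than on any of the $s$-norm bounds. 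All remaining steps are bookkeeping, as the substantive analytic work has been carried out in the earlier propositions.
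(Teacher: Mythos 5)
Your proposal is correct and follows essentially the same route as the paper: the paper also obtains this corollary by feeding the differential inequality \eqref{1Sdiffbound} (the extension of \eqref{1diffbound} to $0\le s\le 1$) and the uniform bounds --- Proposition \ref{endpointprop} for $\rr=-2$, and \eqref{ineqbd} together with Corollary \ref{initialboundcor} and Proposition \ref{prop10} for $-2<\rr<s$ --- into Lemma \ref{decaylemma}. Your added remark that the endpoint $\rr=-2$ cannot be handled via $\|f\|_{-2,\infty}\le\|f\|_{-2}$ and genuinely requires the annulus-by-annulus bound \eqref{boundendpoint} is exactly the point of the paper's Proposition \ref{endpointprop}.
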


This corollary is Theorem \ref{mainthm} in 3D for $0 \le s \le 1$.  To establish Theorem \ref{mainthm} in 3D in the case $s >1$, we further make the following observation:

\begin{prop}\label{higherdecay3D}
Suppose $s$ satisfies $1< s < l-1$ and $f_{0}\in H^{l}(\mathbb{R}^{2})$ for some $l \ge 3$. If $\|f_{0}\|_{1} < k_{0}$ and $\|f_{0}\|_{s}< \infty$, then for the solution $f$ described in Theorem 1, we have the decay estimates
\bea\label{higherdecay}
\|f\|_{s}(t)\lesssim (1+t)^{-s+\rr}
\eea
where we allow $\rr$ to satisfy $-2 \le \rr < s$.  

When $\rr > -2$ then we require 
additionally that $\|f_{0}\|_{\rr} < \infty$, and when $\rr = -2$ then we alternatively require $\|f_{0}\|_{-2,\infty} < \infty$.  The implicit constant in \eqref{higherdecay} depends on  either  $\|f_{0}\|_{\rr}$ (when $\rr > -2$) or $\|f_{0}\|_{-2,\infty}$ (when $\rr = -2$), $\|f_0\|_{s}$ and $k_{0}$.
\end{prop}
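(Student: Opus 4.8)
The plan is to reduce the proposition to the general decay Lemma \ref{decaylemma}, which requires two ingredients: a uniform-in-time bound $\|f\|_{\rr,\infty}(t)\le C_0$, and a differential inequality of the form $\frac{d}{dt}\|f\|_{s}\le-\delta\|f\|_{s+1}$. The second ingredient is only expected to hold for large times $t\ge T$: for $s>1$ the nonlinear contribution carries combinatorial factors $(2n+1)^{s}$ that cannot be absorbed into the dissipation using only the bound $\|f\|_{1}<k_{0}$, so one must instead exploit the \emph{time decay} of $\|f\|_{1}(t)$ to make these contributions small. Accordingly I would first produce the differential inequality for $t\ge T$ and then invoke the decay lemma with the initial time shifted to $T$.

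First I would establish the differential inequality and the key nonlinear estimate. Exactly as in \eqref{energyINEQ} one has
$$\frac{d}{dt}\|f\|_{s}(t)+C\int_{\mathbb R^{2}}|\xi|^{s+1}|\hat f(\xi)|\,d\xi\le\int_{\mathbb R^{2}}|\xi|^{s}|\mathscr F(N(f))(\xi)|\,d\xi,$$
and I would start from the multilinear bound \eqref{firstineq}. Writing $\eta_{0}=\xi-\xi_{1}$, $\eta_{j}=\xi_{j}-\xi_{j+1}$ for $1\le j\le2n-1$ and $\eta_{2n}=\xi_{2n}$, one has $\xi=\sum_{k=0}^{2n}\eta_{k}$, so by the triangle inequality and the power-mean inequality (convexity of $x\mapsto x^{s}$, valid since $s>1$),
$$|\xi|^{s}\le\Big(\sum_{k=0}^{2n}|\eta_{k}|\Big)^{s}\le(2n+1)^{s-1}\sum_{k=0}^{2n}|\eta_{k}|^{s}.$$
Distributing this over the product in \eqref{firstineq}, the term carrying $|\eta_{k}|^{s}$ turns the $k$-th factor $|\eta_{k}||\hat f(\eta_{k})|$ into $|\eta_{k}|^{s+1}|\hat f(\eta_{k})|$ and leaves the remaining $2n$ factors untouched; recognizing the iterated convolution and using that the integral of a convolution of nonnegative functions factorizes, each of the $2n+1$ resulting terms integrates to $\|f\|_{s+1}\|f\|_{1}^{2n}$. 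Summing over $k$ produces the factor $(2n+1)$, hence
$$\int_{\mathbb R^{2}}|\xi|^{s}|\mathscr F(N(f))(\xi)|\,d\xi\le\pi\sum_{n\ge1}a_{n}(2n+1)^{s}\|f\|_{1}^{2n}\|f\|_{s+1}.$$

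Combining the two displays gives $\frac{d}{dt}\|f\|_{s}(t)\le\big(-C+\Phi(\|f\|_{1}(t))\big)\|f\|_{s+1}(t)$, where $\Phi(x)=\pi\sum_{n\ge1}a_{n}(2n+1)^{s}x^{2n}$. Since $a_{n}\lesssim\sqrt n$ this power series has radius of convergence $1$ and $\Phi(0)=0$; because $\|f\|_{1}(t)\le\|f_{0}\|_{1}<k_{0}<1$ it converges for all $t$, and because $\|f\|_{1}(t)\to0$ as $t\to\infty$ by \eqref{initialdecay}, there is $T>0$ with $\Phi(\|f\|_{1}(t))\le C/2$ for all $t\ge T$, so that $\frac{d}{dt}\|f\|_{s}\le-\tfrac{C}{2}\|f\|_{s+1}$ for $t\ge T$. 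I would then apply Lemma \ref{decaylemma} to $t\mapsto f(t+T)$: its hypotheses hold since $\|f\|_{s}(T)\lesssim\|f\|_{H^{l}}(T)<\infty$ by \eqref{hembed} (as $s<l-1$), while the required uniform bound $\|f\|_{\rr,\infty}(t)\le C_{0}$ holds for every admissible $\rr\in[-2,s)$, drawing on Proposition \ref{endpointprop} when $\rr=-2$, Proposition \ref{prop10} when $\rr\in(-2,-1]$, and \eqref{hembed} together with \eqref{sobolevuniform} (giving $\|f\|_{\rr}\lesssim\|f\|_{H^{l}}\lesssim1$ since $\rr<l-1$) when $\rr\in(-1,s)$, using $\|f\|_{\rr,\infty}\le\|f\|_{\rr}$ from \eqref{ineqbd}. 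The lemma yields $\|f\|_{s}(t)\lesssim(1+(t-T))^{-s+\rr}\lesssim(1+t)^{-s+\rr}$ for $t\ge T$, while on $[0,T]$ the norm $\|f\|_{s}(t)$ stays bounded and is therefore also $\lesssim(1+t)^{-s+\rr}$ with a constant depending on $T$; this proves \eqref{higherdecay}.

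The hard part will be the nonlinear estimate in the second paragraph: one must distribute the derivative weight $|\xi|^{s}$ across the multilinear expansion of $N(f)$ so as to produce exactly one high-derivative factor $\|f\|_{s+1}$ together with $2n$ copies of the small quantity $\|f\|_{1}$, while controlling the combinatorial constant by no worse than $(2n+1)^{s}$ so that, after inserting $\|f\|_{1}^{2n}$, the series $\Phi$ still converges and can be driven below the dissipation constant $C$. A secondary technical point is ensuring that the relevant norms are finite, so that the differential inequality is meaningful; this is handled by the instantaneous regularization of the parabolic flow (all norms are finite for $t>0$) together with \eqref{hembed}, which is precisely why it suffices to run the argument on $[T,\infty)$ with $T>0$.
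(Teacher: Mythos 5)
Your proposal is correct and follows essentially the same route as the paper: the same energy inequality, the same multilinear expansion with the convexity bound $|\xi|^s \le (2n+1)^{s-1}\sum_k|\eta_k|^s$ leading to $\int|\xi|^s|\mathscr{F}(N(f))|\,d\xi \le \pi\sum_n a_n(2n+1)^s\|f\|_1^{2n}\|f\|_{s+1}$, the same use of the decay of $\|f\|_1(t)$ to obtain $\frac{d}{dt}\|f\|_s \le -\delta\|f\|_{s+1}$ for $t\ge T$, and the same application of Lemma \ref{decaylemma} to the time-shifted solution plus a trivial bound on $[0,T]$. Your verification of the uniform $\|f\|_{\rr,\infty}$ bounds across the full range $\rr\in[-2,s)$ is, if anything, slightly more explicit than the paper's citation of its earlier propositions.
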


\begin{proof}
First, as in \eqref{energyINEQ}, we have the following inequality
\bea\label{energ}
\frac{d}{dt}\int_{\mathbb{R}^{2}}|\xi|^{s}|\hat{f}| \ d\xi \leq -\int_{\mathbb{R}^{2}} d\xi \ |\xi|^{s+1}|\hat{f}(\xi)| + \int_{\mathbb{R}^{2}} d\xi \ |\xi|^{s}|\mathscr{F}(N(f))(\xi)|.
\eea
Next, following the arguments of \cite{CCGRPS} and \cite{JEMS}, 
we directly obtain that
\begin{multline}\label{higherfirstineq}
\int_{\mathbb{R}^{2}} d\xi \ |\xi|^{s}|\mathscr{F}(N(f))(\xi)| 
\\ 
\leq \pi\sum_{n\geq}a_{n}\int_{\mathbb{R}^{2}}d\xi\int_{\mathbb{R}^{2}}d\xi_{1}\cdots \int_{\mathbb{R}^{2}}d\xi_{2n} |\xi|^{s}|\xi-\xi_{1}| |\hat{f}(\xi-\xi_{1})| 
\\
\times
\prod_{j=1}^{2n-1}|\xi_{j} - \xi_{j+1}| |\hat{f}(\xi_{j} - \xi_{j+1})| |\xi_{2n}||\hat{f}(\xi_{2n})|.
\end{multline}
We use the inequality for $s>1$
\bea\label{trianglepower}
|\xi|^{s} \leq (2n+1)^{s-1}(|\xi-\xi_{1}|^{s}+|\xi_{1}-\xi_{2}|^{s}+\ldots+|\xi_{2n-1}-\xi_{2n}|^{s}+|\xi_{2n}|^{s}).
\eea
Applying (\ref{trianglepower}) and Young's inequality to (\ref{higherfirstineq}), it can be shown that
\bea\label{highersum}
\int_{\mathbb{R}^{2}} d\xi \ |\xi|^{s}|\mathscr{F}(N(f))(\xi)| \leq \pi\sum_{n\geq1}a_{n}(2n+1)^{s}\|f\|_{1}^{2n}\int_{\mathbb{R}^{2}}d\xi \ |\xi|^{s+1}|\hat{f}(\xi)|.
\eea
Hence, by (\ref{energ}) have that
\bea
\frac{d}{dt}\|f\|_{s}(t) \leq -\delta(t) \|f\|_{s+1}(t)
\eea
where $$\delta(t) = 1-\pi\sum_{n\geq1}a_{n}(2n+1)^{s}\|f\|_{1}(t)^{2n}.$$ By Theorem \ref{mainthm}, we know that if $\|f_{0}\|_{1} < k_{0}$, then
(\ref{thmproved}) holds. 

Thus, there exists some $T>0$ such that $\|f\|_{1}(T)$ is small enough such that $\delta(T) > \delta > 0$ for some constant $\delta > 0$. Since $\|f\|_{1}(t) \leq \|f\|_{1}(T)$ for $t\geq T$, we know that $\delta(t) > \delta(T) > \delta > 0$. Thus,

\bea\label{decreasing}
\frac{d}{dt}\|f\|_{s}(t) \leq -\delta \|f\|_{s+1}(t)
\eea
for all $t \geq T$. Now, consider the interface function $f_{T}$ defined by $f_{T} = f(t+T)$ defined for $t\geq T$. Then, $f_{T}$ satisfies the interface equation (\ref{interfaceq}) with initial condition $f_{T}(0) = f(T)$. For the case $\nu > -2$, since $\|f_{0}\|_{\nu} < \infty$, we know by Corollary \ref{initialboundcor} and Proposition \ref{prop10} that $\|f_{T}(0)\|_{\nu} = \|f\|_{\nu}(T) <\infty$ and $\|f_{T}\|_{\nu} \lesssim 1$ uniformly in time. For the case $\nu = 2$, since $\|f_{0}\|_{-2,\infty} < \infty$, we know by Proposition \ref{endpointprop} that $\|f_{T}(0)\|_{-2,\infty} =  \|f\|_{-2,\infty}(T) < \infty$ and $\|f_{T}\|_{-2,\infty} \lesssim 1$ uniformly in time. Further, by (\ref{decreasing}), 
$$ \frac{d}{dt}\|f_{T}\|_{s}(t) \leq -\delta \|f_{T}\|_{s+1}(t).$$
Hence, we can apply Lemma \ref{decaylemma} to $f_{T}$ to obtain the decay:
\bea\label{largetimedecay}
\|f_{T}\|_{s}(t) \leq \gamma (1+t)^{-s+\nu}.
\eea
Since $f(t) = f_{T}(t-T)$, we have the following decay estimate for $t\geq T$,
$$
\|f\|_{s}(t) \leq \gamma (1+t-T)^{-s+\nu} \leq \gamma (1+T)^{s+\nu}(1+t)^{-s+\nu}. 
$$  
Further, for $0\leq t\leq T$: $$\|f\|_{s}(t) \leq \|f\|_{H^{l}}(t) \leq C_{l}$$ 
where $C_{l}=\|f_{0}\|_{H^{l}}\exp(CP(k_{0})\|f_{0}\|_{1+\delta}/\mu)$ is the constant given by (\ref{sobolevuniform}). Collecting these last few estimates establishes the result.
\end{proof}

We have now established the decay results for the 3D Muskat problem. Similar results can be summarized for the 2D problem as well.

\section{Decay in 2D}\label{2Dsection}

In this last section, we will sketch the proof of the large time decay results for the 2D Muskat problem \eqref{interfaceq2D} given in Theorem \ref{mainthm} when $\Ddim=1$.   The proof is analogous to the 3D case that was just shown.

To prove the decay, we will first establish the uniform bounds of the relevant norms.  Firstly from \eqref{hembed} we obtain the uniform in time bound
\begin{equation} \label{sbound2Dlow}
		\|f\|_{s}(t) \lesssim \|f\|_{H^2(\mathbb{R})}(t) \lesssim 1,
\end{equation}
where in the above inequality we can allow $-\frac{1}{2} < s < \frac{3}{2}$.   From the argument in  \cite{CCGRPS} analogous to \eqref{sobolevuniform}, it can be shown that for any $0 < \delta < \frac{1}{2}$ we have
\bea\label{sobolevuniform2D}
\|f\|_{H^{l}(\mathbb{R})}(t) \leq \|f_{0}\|_{H^{l}(\mathbb{R})}\exp(CP(c_{0})\|f_{0}\|_{1+\delta}),
\quad l \ge 2.
\eea
Then the uniform bound of
$\|f\|_{H^2(\mathbb{R})}(t) \lesssim 1$ follows from \eqref{sobolevuniform2D} using the embedding 
\eqref{hembed} as in \eqref{sbound2Dlow} on the norm $\|f_{0}\|_{1+\delta}$.

Following the proof of \eqref{1normineq2D} that is given in \cite{JEMS} it can be directly shown that
\bea\label{1S2Ddiffbound}
\frac{d}{dt}\|f\|_{s}(t) \leq -C \|f\|_{s+1},  \quad 0 \le s \le 1.
\eea
Now using Lemma \ref{decaylemma}, \eqref{1S2Ddiffbound} for $\ss =s \in [0, 1]$ and  \eqref{sbound2Dlow} we obtain 
	\bea\label{initial2Ddecay}
	\|f\|_{s} \lesssim (1+t)^{-s+\rr},
	\eea
	for any $-\frac{1}{2} < \rr < s$; here the implicit constant depends on $\|f_{0}\|_{\rr}$.  

The next step is to obtain uniform in time bounds for $\|f\|_{s}(t)$ when $-1 < s \le -\frac{1}{2}$.

\begin{prop}
Suppose $f$ is the solution to the Muskat problem \eqref{interfaceq2D} in 2D  described by Theorem \ref{2Doldthm} with $\|f_{0}\|_{s}< \infty$ for some $-1 < s < -\frac{1}{2}$.   Then 
\bea\label{decay2d}
\frac{d}{dt} \|f \|_{s} (t) \lesssim \|f\|_{1},
\eea
where the implicit constant depends on $s$, $c_{0}$ and $\|f_0\|_{H^{2}(\mathbb{R})}$.
\end{prop}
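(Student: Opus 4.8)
The plan is to mirror the proof of Proposition~\ref{prop9} essentially verbatim, replacing the $3$D nonlinearity $N(f)$ by the $2$D nonlinearity $T(f)$ from \eqref{Tf} and carefully tracking the dimension-dependent exponents (here $\Ddim=1$). First I would reproduce the energy inequality \eqref{energyINEQ} in the $1$D setting: following the computation of $\frac{d}{dt}\|f\|_1$ in \cite{JEMS}, one obtains
\[
\frac{d}{dt}\|f\|_s(t) + C\int_\mathbb{R} d\xi\,|\xi|^{s+1}|\hat{f}(\xi)| \le \int_\mathbb{R} d\xi\,|\xi|^s|\mathscr{F}(T(f))(\xi)|,
\]
so that it suffices to bound the right-hand side by $\|f\|_1$ up to a uniform constant. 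Expanding $\frac{t^2}{1+t^2}=\sum_{n\ge1}(-1)^{n-1}t^{2n}$ inside \eqref{Tf} and arguing as in \cite{JEMS,CCGRPS}, then symmetrizing with $\tilde{\hat{f}}(\cdot)$ and rewriting as iterated convolutions, yields an estimate of the form
\[
\int_\mathbb{R} d\xi\,|\xi|^s|\mathscr{F}(T(f))(\xi)| \lesssim \sum_{n\ge1} b_n\int_\mathbb{R}|\xi_{2n}||\hat{f}(\xi_{2n})|\Big(|\cdot|^s\ast\big(\ast^{2n}|\cdot||\tilde{\hat{f}}(\cdot)|\big)\Big)(\xi_{2n})\,d\xi_{2n},
\]
where, in contrast to the $a_n$ of the $3$D case, the $2$D coefficients $b_n$ are bounded by $1$ (they come from the geometric-type series above), so summability will follow directly from $\|f\|_1\le c_0$ together with \eqref{c0}.

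Next I would estimate each summand exactly as in \eqref{holder}--\eqref{repeatedyoung}, but with exponents adapted to $\Ddim=1$. H\"older's inequality separates off $\||\cdot||\hat{f}|\|_{L^1}=\|f\|_1$ and leaves an $L^\infty$ norm of the convolution term. Young's inequality with $\tfrac1p+\tfrac1q=1$ splits off one factor, where I choose $\tfrac1q=-s-\tfrac12$ so that $q\in(2,\infty)$ and correspondingly $\tfrac1p=s+\tfrac32\in(\tfrac12,1)$, i.e. $p\in(1,2)$, precisely because $-1<s<-\tfrac12$. Interpolating $\||\cdot||\hat{f}|\|_{L^p}$ between $L^1$ and $L^2$ gives $\|f\|_1^{\theta}\|f\|_{H^2}^{1-\theta}$ with $\theta=2s+2\in(0,1)$; note that $H^2(\mathbb{R})$, rather than $H^3$, suffices here since $\||\xi||\hat{f}|\|_{L^2}\le\|f\|_{H^2(\mathbb{R})}$. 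The Hardy--Littlewood--Sobolev inequality applies to the remaining factor because the kernel exponent $\lambda=-s$ satisfies $0<\lambda<1=\Ddim$, and with $r=2$ the HLS balance $1+\tfrac1q=\lambda+\tfrac12=-s+\tfrac12$ is exactly my choice of $q$. Repeated applications of Young's inequality with $1+\tfrac12=1+\tfrac12$ then control the remaining $2n-1$ convolutions by $\|f\|_{H^2}\|f\|_1^{2n-2}$.

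Combining these steps bounds each summand by $\|f\|_{H^2}^{2-\theta}\|f\|_1^{2n-1+\theta}$, and summing over $n$ factors out a single power of $\|f\|_1$. Since $\|f\|_1\le\|f_0\|_1<c_0$ by Theorem~\ref{2Doldthm} and $\|f\|_{H^2(\mathbb{R})}\lesssim1$ uniformly in time by \eqref{sobolevuniform2D} and \eqref{sbound2Dlow}, the series $\sum_n b_n\|f\|_1^{2n}$ converges to a uniform constant depending only on $s$, $c_0$, and $\|f_0\|_{H^2(\mathbb{R})}$, which gives \eqref{decay2d}. The only genuinely new work relative to the $3$D case is the bookkeeping of the dimension-$1$ exponents; I expect the main point requiring care to be verifying that the HLS admissibility $0<-s<1$ and the Young/interpolation ranges $q\in(2,\infty)$, $p\in(1,2)$, $\theta\in(0,1)$ all hold \emph{simultaneously}, which is exactly what pins the argument to the interval $-1<s<-\tfrac12$ (equivalently $-\Ddim<s<-\Ddim/2$). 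Everything else transfers directly from Proposition~\ref{prop9}.
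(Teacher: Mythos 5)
Your proposal is correct and follows essentially the same route as the paper: the paper's own proof simply transfers the argument of Proposition \ref{prop9} to one dimension, applying the energy inequality, the convolution expansion of the nonlinearity, H\"older, Young, interpolation, and the Hardy--Littlewood--Sobolev inequality with dimension-adapted exponents, exactly as you describe. Your explicit bookkeeping ($1/q=-s-\tfrac12$, $p\in(1,2)$, $\theta=2s+2$, HLS admissibility $0<-s<1$ pinning $-1<s<-\tfrac12$, and $\||\xi|\hat f\|_{L^2}\le\|f\|_{H^2(\mathbb{R})}$) matches and in fact makes explicit the details the paper leaves implicit.
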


\begin{proof}
The proof follows similarly to the proof of \eqref{decay1}. The range of $s$ allowed is different due to range of acceptable exponents allowed by the Hardy-Littlewood-Sobolev inequality in one dimension.

Similarly to the proof in the 3D case, we have
$$\frac{d}{dt} \|f\|_{s} (t)+ \int_{\mathbb{R}} d\xi \ |\xi|^{s+1}|\hat{f}(\xi)|  \leq  \int_{\mathbb{R}} d\xi \ |\xi|^{s}|\mathscr{F}(N(f))(\xi)|.$$
From the proof of Theorem 3.1 in \cite{JEMS}, we obtain the inequality:
$$ \int_{\mathbb{R}} d\xi \ |\xi|^{s}|\mathscr{F}(N(f))(\xi)| \leq 2 \sum_{n\geq 1}   \int_{\mathbb{R}}  |\xi_{2n}||\hat{f}(\xi_{2n})| \Big(|\cdot|^{s} \ast \Big(\ast^{2n}|\cdot||\tilde{\hat{f}}(\cdot)| \Big) \Big)(\xi_{2n}) d\xi_{2n}. $$ From here, we can apply the Hardy-Littlewood-Sobolev inequality and Young's inequality to obtain, as in the 3D case, that
\begin{multline*}
	\int_{\mathbb{R}} d\xi \ |\xi|^{s}|\mathscr{F}(N(f))(\xi)| 
	\\
\leq 2 C_{s} \||\xi|\hat{f}(\xi)\|_{L^{2}}^{2-\theta}\|f\|_{1}^{\theta}\sum_{n\geq 1} \|f\|_{1}^{2n-1} = 2 C_{s} \|f\|_{H^{2}}^{2-\theta}\|f\|_{1}^{1+\theta}\sum_{n\geq 0} \|f\|_{1}^{2n},
\end{multline*}
where we used the fact that $\||\xi|\hat{f}(\xi)\|_{L^{2}} \leq \|f\|_{H^{2}(\mathbb{R})}$ by Plancharel's identity. By the uniform bounds on $\|f\|_{H^{2}(\mathbb{R})}$ and $\|f\|_{1}$, we obtain the result.  
\end{proof}

Then using \eqref{decay2d} combined with \eqref{initial2Ddecay}, analogous to Proposition \ref{prop10} we obtain
\begin{equation}\label{uniform2ds}
\|f\|_{s}(t) \lesssim 1,	
\end{equation}
which now holds uniformly in time for $-1<s\leq \frac{3}{2}$.  The uniform bound  when $s=-\frac{1}{2}$ is obtained using the argument from \eqref{interpbd}.   Further analogous to \eqref{decayepsilon} using \eqref{1S2Ddiffbound} and  Lemma \ref{decaylemma} we conclude the time decay
\bea\label{decay2Depsilon}
\|f\|_{s}(t) \lesssim (1+t)^{-s+\rr},
\eea
which now holds for any $s \in [0, 1]$ and any $\rr \in (-1, s)$ where the implicit constant in particular depends on $\|f\|_{\rr}$ and $c_{0}$.  

Lastly, for the critical case, analogous to Proposition \ref{endpointprop} using  \eqref{decay2Depsilon} we can show 
$$
\|f\|_{-1,\infty}(t)\lesssim 1,
$$ 
where the implicit constant depends on $\|f_{0}\|_{-1,\infty} < \infty$ and $c_0$.  This bound enables us to analogously prove the end point decay rate of \eqref{decay2Depsilon} with $\rr=-1$.  Also the 2D version of Proposition \ref{higherdecay3D} follows similarly.  Collecting all of these estimates establishes Theorem \ref{mainthm} in the 2D case. \hfill {\bf Q.E.D.}

\begin{bibdiv}
\begin{biblist}

\bib{MR2128613}{article}{
   author={Ambrose, David M.},
   title={Well-posedness of two-phase Hele-Shaw flow without surface
   tension},
   journal={European J. Appl. Math.},
   volume={15},
   date={2004},
   number={5},
   pages={597--607},
   doi={10.1017/S0956792504005662},
}

\bib{MR1612026}{article}{
   author={Bailly, Jacques-Herbert},
   title={Local existence of classical solutions to first-order parabolic
   equations describing free boundaries},
   journal={Nonlinear Anal.},
   volume={32},
   date={1998},
   number={5},
   pages={583--599},
   doi={10.1016/S0362-546X(97)00504-X},
}

\bib{Bear}{book}{
   author={Bear, J.},
   title={Dynamics of Fluids in Porous Media},
   publisher={American Elsevier, New York},
   date={1972},
}

\bib{Beck}{article}{
   author={Beck, Thomas},
   author={Sosoe, Philippe},
   author={Wong, Percy},
   title={Duchon-Robert solutions for the Rayleigh-Taylor and Muskat
   problems},
   journal={J. Differential Equations},
   volume={256},
   date={2014},
   number={1},
   pages={206--222},
   doi={10.1016/j.jde.2013.09.001},
}

\bib{MR1207667}{article}{
   author={Bertozzi, A. L.},
   author={Constantin, P.},
   title={Global regularity for vortex patches},
   journal={Comm. Math. Phys.},
   volume={152},
   date={1993},
   number={1},
   pages={19--28},
   issn={0010-3616},
   review={\MR{1207667}},
}

\bib{CCFGL}{article}{
   author={Castro, {\'A}ngel},
   author={C{\'o}rdoba, Diego},
   author={Fefferman, Charles},
   author={Gancedo, Francisco},
   author={L{\'o}pez-Fern{\'a}ndez, Mar{\'{\i}}a},
   title={Rayleigh-Taylor breakdown for the Muskat problem with applications
   to water waves},
   journal={Ann. of Math. (2)},
   volume={175},
   date={2012},
   number={2},
   pages={909--948},
   doi={10.4007/annals.2012.175.2.9},
}

\bib{CCFG}{article}{
   author={Castro, {\'A}ngel},
   author={C{\'o}rdoba, Diego},
   author={Fefferman, Charles},
   author={Gancedo, Francisco},
   title={Breakdown of smoothness for the Muskat problem},
   journal={Arch. Ration. Mech. Anal.},
   volume={208},
   date={2013},
   number={3},
   pages={805--909},
   issn={0003-9527},
   review={\MR{3048596}},
   doi={10.1007/s00205-013-0616-x},
}	

\bib{MR3482335}{article}{
   author={Castro, Angel},
   author={C{\'o}rdoba, Diego},
   author={G{\'o}mez-Serrano, Javier},
   title={Existence and regularity of rotating global solutions for the
   generalized surface quasi-geostrophic equations},
   journal={Duke Math. J.},
   volume={165},
   date={2016},
   number={5},
   pages={935--984},
   issn={0012-7094},
   review={\MR{3482335}},
   doi={10.1215/00127094-3449673},
}

\bib{ChGBSh}{article}{
   author={Cheng, C. H. Arthur},
   author={Granero-Belinch{\'o}n, Rafael},
   author={Shkoller, Steve},
   title={Well-posedness of the Muskat problem with $H^2$ initial data},
   journal={Adv. Math.},
   volume={286},
   date={2016},
   pages={32--104},
   issn={0001-8708},
   review={\MR{3415681}},
   doi={10.1016/j.aim.2015.08.026},
}

\bib{MR1668586}{article}{
	author={Constantin, Adrian},
	author={Escher, Joachim},
	title={Wave breaking for nonlinear nonlocal shallow water equations},
	journal={Acta Math.},
	volume={181},
	date={1998},
	number={2},
	pages={229--243},
	issn={0001-5962},
	review={\MR{1668586}},
	doi={10.1007/BF02392586},
}

\bib{JEMS}{article}{
   author={Constantin, Peter},
   author={C{\'o}rdoba, Diego},
   author={Gancedo, Francisco},
   author={Strain, Robert M.},
   title={On the global existence for the Muskat problem},
   journal={J. Eur. Math. Soc. (JEMS)},
   volume={15},
   date={2013},
   number={1},
   pages={201--227},
   doi={10.4171/JEMS/360},
}

\bib{CCGRPS}{article}{
   author={Constantin, Peter},
   author={C{\'o}rdoba, Diego},
   author={Gancedo, Francisco},
   author={Rodriguez-Piazza, L.},
   author={Strain, Robert M.},
   title={On the Muskat problem: global in time results in 2D and 3D.},
   journal={Am. J. Math.},
   date={2016},
   pages={in press, 1--34},
eprint = {arXiv:1310.0953},
}

\bib{ConstantinFoias}{book}{
   author={Constantin, P.},
   author={Foias, C.},
   series={Chicago Lectures in Math.},
   title={Navier-Stokes Equation},
   publisher={Univ. of Chicago Press, Chicago, IL},
   date={1988},
}

\bib{CGSV}{article}{
   author={Constantin, Peter},
      author={Gancedo, Francisco},
   author={Shvydkoy, R.},
   author={Vicol, V.},
   title={Global regularity for 2D Muskat equations with finite slope},
   date={2015},
   pages={preprint},
   eprint={ArXiv:1507.01386},
}

\bib{MR1223740}{article}{
   author={Constantin, P.},
   author={Pugh, M.},
   title={Global solutions for small data to the Hele-Shaw problem},
   journal={Nonlinearity},
   volume={6},
   date={1993},
   number={3},
   pages={393--415},
}

\bib{ADP2}{article}{
	author={C{\'o}rdoba, Antonio},
	author={C{\'o}rdoba, Diego},
	author={Gancedo, Francisco},
	title={Porous media: the Muskat problem in three dimensions},
	journal={Anal. PDE},
	volume={6},
	date={2013},
	number={2},
	pages={447--497},
	issn={2157-5045},
	review={\MR{3071395}},
	doi={10.2140/apde.2013.6.447},
}

\bib{MR2753607}{article}{
   author={C{\'o}rdoba, Antonio},
   author={C{\'o}rdoba, Diego},
   author={Gancedo, Francisco},
   title={Interface evolution: the Hele-Shaw and Muskat problems},
   journal={Ann. of Math. (2)},
   volume={173},
   date={2011},
   number={1},
   pages={477--542},
   doi={10.4007/annals.2011.173.1.10},
}

\bib{interface}{article}{
   author={C{\'o}rdoba, Diego},
   author={Gancedo, Francisco},
   title={Contour dynamics of incompressible 3-D fluids in a porous medium
   with different densities},
   journal={Comm. Math. Phys.},
   volume={273},
   date={2007},
   number={2},
   pages={445--471},
   doi={10.1007/s00220-007-0246-y},
}

\bib{MR2472040}{article}{
   author={C{\'o}rdoba, Diego},
   author={Gancedo, Francisco},
   title={A maximum principle for the Muskat problem for fluids with
   different densities},
   journal={Comm. Math. Phys.},
   volume={286},
   date={2009},
   number={2},
   pages={681--696},
   issn={0010-3616},
   review={\MR{2472040}},
   doi={10.1007/s00220-008-0587-1},
}


\bib{Escher}{article}{
   author={Escher, Joachim},
   author={Matioc, Bogdan-Vasile},
   title={On the parabolicity of the Muskat problem: well-posedness,
   fingering, and stability results},
   journal={Z. Anal. Anwend.},
   volume={30},
   date={2011},
   number={2},
   pages={193--218},
   issn={0232-2064},
   review={\MR{2793001}},
   doi={10.4171/ZAA/1431},
}

\bib{MR3181767}{article}{
   author={Fefferman, Charles L.},
   title={No-splash theorems for fluid interfaces},
   journal={Proc. Natl. Acad. Sci. USA},
   volume={111},
   date={2014},
   number={2},
   pages={573--574},
   doi={10.1073/pnas.1321805111},
}

\bib{MR3466160}{article}{
   author={Fefferman, Charles},
   author={Ionescu, Alexandru D.},
   author={Lie, Victor},
   title={On the absence of splash singularities in the case of two-fluid
   interfaces},
   journal={Duke Math. J.},
   volume={165},
   date={2016},
   number={3},
   pages={417--462},
   doi={10.1215/00127094-3166629},
}

\bib{MR2397460}{article}{
   author={Gancedo, Francisco},
   title={Existence for the $\alpha$-patch model and the QG sharp front in
   Sobolev spaces},
   journal={Adv. Math.},
   volume={217},
   date={2008},
   number={6},
   pages={2569--2598},
   doi={10.1016/j.aim.2007.10.010},
}

\bib{MR3181769}{article}{
   author={Gancedo, Francisco},
   author={Strain, Robert M.},
   title={Absence of splash singularities for surface quasi-geostrophic
   sharp fronts and the Muskat problem},
   journal={Proc. Natl. Acad. Sci. USA},
   volume={111},
   date={2014},
   number={2},
   pages={635--639},
   doi={10.1073/pnas.1320554111},
}

\bib{GraneroBelinchon}{article}{
   author={Granero-Belinch{\'o}n, Rafael},
   title={Global existence for the confined Muskat problem},
   journal={SIAM J. Math. Anal.},
   volume={46},
   date={2014},
   number={2},
   pages={1651--1680},
   doi={10.1137/130912529},
}

\bib{HeleShawN}{article}{
   author={Hele-Shaw, H. S.},
   title={The flow of water},
   journal={Nature},
   volume={58},
   date={1898},
   pages={34–36},
}

\bib{MR1294935}{article}{
   author={Hou, Thomas Y.},
   author={Lowengrub, John S.},
   author={Shelley, Michael J.},
   title={Removing the stiffness from interfacial flows with surface
   tension},
   journal={J. Comput. Phys.},
   volume={114},
   date={1994},
   number={2},
   pages={312--338},
   doi={10.1006/jcph.1994.1170},
}

%
%

\bib{muskat}{book}{
   author={Muskat, M.},
   title={The flow of homogeneous fluids through porous media},
   publisher={Springer, New York},
   date={1937},
}

\bib{MR0097227}{article}{
   author={Saffman, P. G.},
   author={Taylor, Geoffrey},
   title={The penetration of a fluid into a porous medium or Hele-Shaw cell
   containing a more viscous liquid},
   journal={Proc. Roy. Soc. London. Ser. A},
   volume={245},
   date={1958},
   pages={312--329. (2 plates)},
}

\bib{MR2070208}{article}{
   author={Siegel, Michael},
   author={Caflisch, Russel E.},
   author={Howison, Sam},
   title={Global existence, singular solutions, and ill-posedness for the
   Muskat problem},
   journal={Comm. Pure Appl. Math.},
   volume={57},
   date={2004},
   number={10},
   pages={1374--1411},
   doi={10.1002/cpa.20040},
}

\bib{SohingerStrain}{article}{
   author={Sohinger, Vedran},
   author={Strain, Robert M.},
   title={The Boltzmann equation, Besov spaces, and optimal time decay rates
   in $\Bbb{R}_x^n$},
   journal={Adv. Math.},
   volume={261},
   date={2014},
   pages={274--332},
   doi={10.1016/j.aim.2014.04.012},
}


\bib{Wiegner}{article}{
   author={Wiegner, Michael},
   title={Decay results for weak solutions of the Navier-Stokes equations on
   ${\bf R}^n$},
   journal={J. London Math. Soc. (2)},
   volume={35},
   date={1987},
   number={2},
   pages={303--313},
   issn={0024-6107},
   doi={10.1112/jlms/s2-35.2.303},
}

\end{biblist}
\end{bibdiv}

\end{document}